\newtheorem{prethm}{{\bf Theorem}}
\newenvironment{thm}{\begin{prethm}{\hspace{-0.5
               em}{\bf.}}}{\end{prethm}}
\newtheorem{prepro}{{\bf Proposition}}
\newtheorem{precor}{{\bf Corollary}}
\newenvironment{cor}{\begin{precor}{\hspace{-0.5
               em}{\bf.}}}{\end{precor}}
\newtheorem{preconj}{{\bf Conjecture}}
\newenvironment{conj}{\begin{preconj}{\hspace{-0.5
               em}{\bf.}}}{\end{preconj}}
\newtheorem{preremark}{{\bf Remark}}
\newenvironment{remark}{\begin{preremark}\rm{\hspace{-0.5
               em}{\bf.}}}{\end{preremark}}
\newtheorem{prelem}{{\bf Lemma}}
\newenvironment{lem}{\begin{prelem}{\hspace{-0.5
               em}{\bf.}}}{\end{prelem}}
\newtheorem{prequestion}{{\bf Question}}
\newenvironment{question}{\begin{prequestion}{\hspace{-0.5
               em}{\bf.}}}{\end{prequestion}}
\newtheorem{preproof}{{\bf Proof.}}
\newenvironment{proof}[1]{\begin{preproof}{\rm
               #1}\hfill{$\Box$}}{\end{preproof}}
\renewcommand{\thefootnote}
\title{On the largest real root of independence polynomials of graphs, an ordering on graphs, and starlike trees}
\author{Mohammad Reza Oboudi$^{\,\rm a,b}$ \\
{\footnotesize {\em $^{\rm a}$Department of Mathematics, University of Isfahan,}}\\
{\footnotesize {\em Isfahan 81746-73441, Iran}}\\
{\footnotesize {\em $^{\rm b}$School of Mathematics, Institute for Research in Fundamental Sciences (IPM),}}\\
 {\footnotesize {\em P.O. Box 19395-5746, Tehran, Iran}}}
\begin{document}
\footnotetext{{\em E-mail Addresses}: {\tt mr.oboudi@sci.ui.ac.ir,\,mr\_oboudi@yahoo.com}
(M.R. Oboudi).}
\date{}
\maketitle

\begin{abstract}
Let $G$ be a simple graph of order $n$. An independent set in a
graph is a set of pairwise non-adjacent vertices. The independence
polynomial of $G$ is the polynomial $I(G,x)=\sum_{k=0}^{n} s(G,k)
x^{k}$, where $s(G,k)$ is the number of independent sets  of $G$ of
size $k$ and $s(G,0)=1$. Clearly all real roots of $I(G,x)$ are negative. Let $\xi(G)$ be the largest real root of
$I(G,x)$. Let $H$ be a simple graph. By $G \succeq H$ we mean
that $I(H,x)\geq I(G,x)$ for every $x$ in the interval
$[\xi(G),0]$. We note that $G\succeq H$ implies that $\xi(G)\geq \xi(H)$. Also we let $G\succ H$ if and only if $G\succeq H$ and $I(G,x)\neq I(H,x)$. We prove that for every tree $T$ of order
$n$, $S_n\succeq T\succeq P_n$, where $S_n$ and $P_n$ are the
star and the path of order n, respectively. By
$T=T(n_1,\ldots,n_k)$ we mean a tree $T$ which has a
vertex $v$ of degree $k$ such that $T\setminus
v=P_{n_1-1}+\cdots+P_{n_k-1}$, that is $T\setminus v$ is the disjoint union of the paths $P_{n_1-1},\ldots,P_{n_k-1}$. Let $X=(x_1,\ldots,x_k)$ and
$Y=(y_1,\ldots,y_k)$, where $x_1\geq \cdots\geq x_k$ and
$y_1\geq\cdots\geq y_k$ are real. By $X\succ Y$, we mean
$x_1=y_1,\ldots,x_{t-1}=y_{t-1}$ and $x_t>y_t$ for some $t\in\{1,\ldots,k\}$. We
let $X\succ_{d}Y$, if $X\neq Y$ and for every $j$, $1\leq j\leq k$,
$\sum_{i=1}^{j}x_i\geq \sum_{i=1}^{j}y_i$. Among all
trees with fixed number of vertices, we show that if
$(m_1,\ldots,m_k)\succ_{d}(n_1,\ldots,n_k)$, then
$T(n_1,\ldots,n_k)\succ T(m_1,\ldots,m_k)$. We conjecture that $T(n_1,\ldots,n_k)\succ
T(m_1,\ldots,m_k)$ if and only if $(m_1,\ldots,m_k)\succ
(n_1,\ldots,n_k)$, where $\sum_{i=1}^kn_i=\sum_{i=1}^km_i$.
\end{abstract}
\noindent{\small\noindent{\small {\it AMS Classification}}:
05C31, 05C69, 05C70.

\noindent\vspace{0.1mm}{\it Keywords}: Independence polynomial; Independent set; Largest root; Tree; Starlike tree; Partial order.}

\section{Introduction}

Throughout this paper we will consider only simple graphs. Let
$G=(V,E)$ be a simple graph. The {\it order} of $G$ denotes the
number of vertices of $G$. For every vertex $v\in V$, the {\it
closed neighborhood} of $v$ is the set $[v]=\{u \in V\,:\,uv\in E\}\cup
\{v\}$. For every edge $e\in E$ with end points $u$ and $v$, the {\it
closed neighborhood} of $e$ is the set $[e]=[u]\cup[v]$. For two graphs
$G_1=(V_1,E_1)$ and $G_2=(V_2,E_2)$, the {\it disjoint union} of
$G_1$ and $G_2$ denoted by $G_1+G_2$ is the graph with vertex
set $V_1\cup V_2$ and edge set $E_1\cup E_2$. The
graph $rG$ denotes the disjoint union of $r$ copies of
$G$.

A set $S\subseteq V(G)$ is an {\it independent set} if there is
no edge between the vertices of $S$. The {\it independence
number} of $G$, $\alpha(G)$, is the maximum cardinality of an
independent set of $G$. The {\it
independence polynomial} of $G$, $I(G,x)$, is defined as
$I(G,x)=\sum_{k=0}^{\alpha(G)} s(G,k) x^{k}$, where $s(G,k)$ is the
number of independent sets of $G$ of size $k$, and $s(G,0)=1$. This
polynomial was first introduced by Gutman and Harary in
\cite{gh}. For more details see \cite{gg,ggg,gh,lm}. One can see that $s(G,1)=n$ and $s(G,2)={{n} \choose 2}-m$, where $n$
and $m$ are the number of vertices and the number of edges of $G$, respectively. This shows that by independence polynomial one can obtain the number of vertices and the number of edges of the graph.

The roots of  independence polynomial like other graph polynomials
such as characteristic polynomial~\cite{m}, chromatic polynomial~\cite{bl}, domination polynomial~\cite{aaop}, edge cover polynomial~\cite{co} and  matching polynomial~\cite{f},
reflect some important information about the structure of graphs. Unlike characteristic polynomial and matching polynomial, independence polynomial of some graphs has non-real roots. It was conjectured that \cite{h}, for every claw-free graph, the
independence polynomial has only real roots. Recently,  M.
Chudnovsky and P. Seymour \cite{cs} showed that this conjecture
is valid. It was proved that the root of the smallest modulus of the independence polynomial of any
graph is real~\cite{bdn}. There are some graphs, for instance $K_{1,3}$, in which
their independence polynomial have non-real roots. The {\it
complete graph}, the {\it cycle}, and the {\it path} of order
$n$, are denoted  by $K_n$, $C_n$ and $P_n$, respectively. We
denote  the {\it complete bipartite graph} with part sizes $m$
and $n$, by $K_{m,n}$. Also $K_{1,n}$ is called a {\it star}. We let $S_n=K_{1,n-1}$. For
every vertex $v\in V(G)$, the {\it degree} of $v$ is the number
of edges incident with $v$ and is denoted by $deg_G(v)$. For
simplicity we write $deg(v)$ instead of $deg_G(v)$. By $\Delta(G)$ we mean the maximum degree of vertices of $G$.  A {\it starlike} tree is a tree which has only one
vertex of degree greater than two. Let $x_1,\ldots,x_n$ be some real numbers. By $X=\{x_1,\ldots,x_n\}$ we mean the multiset $X$ such that $x_1,\ldots,x_n$ are its member. In the other words in this paper set is multiset.
\\

It is well known that all roots of characteristic polynomial are real. Let $G$ and $H$ be two graphs. Let $\phi(G,\lambda)$ and $\phi(H,\lambda)$ be their characteristic polynomials, respectively. Suppose that $\Lambda(G)$ and $\Lambda(H)$ are the largest eigenvalue, largest root of the characteristic polynomial, of $G$ and $H$, respectively. L. Lov\'{a}sz and J. Pelik\'{a}n in  \cite{lovasz} defined $G\succ H$ if and only if $\phi(H,\lambda)\geq\phi(G,\lambda)$ for every $\lambda$ in the interval $[\Lambda(G),\infty)$.

Now, similarly we define an new ordering on the set of all simple graphs as follows:
Let $G$ and $H$ be two graphs. Let $I(G,x)$ and $I(H,x)$ be their independence polynomial, respectively. Since all coefficients of the independence polynomial are positive, all its real root are negative. Assume that $\xi(G)$ and $\xi(H)$ are the largest real roots of $I(G,x)$ and $I(H,x)$, respectively (In \cite{bdn} it was proved that the independence polynomial has at least one real root). We let $$G\succeq H,\, {\rm if\, and\, only\, if }\,\, I(H,x)\geq I(G,x),\,\, {\rm for\, every\,} x\in[\xi(G),0].$$
Also, we let $G\succ H$, if and only if $G\succeq H$ and $I(G,x)\neq I(H,x)$. We say
$G$ and $H$ are {\it${\cal I}$-equivalent},
if $I(G,x)=I(H,x)$.
The structure of this paper is the following. In the next section we states some results about the $\xi(G)$. In section $3$ we investigate the ordering $\succeq$. In section $4$ we obtain the minimum and the maximum element of trees with respect to the ordering $\succeq$. In section $5$ we define some orderings on $\mathbb{R}^n$. Finally in the last section we investigate about the starlike trees and the ordering $\succeq$.

\section{Some properties of independence polynomial and its largest real root}

In this section we state some results about the independence polynomial and its largest real root. Let $G$ be a graph of order $n$.
We note that in some papers, the independence polynomial of $G$ is defined as $\sum_{k=0}^ns(G,k)(-x)^k$, where $s(G,k)$ is the number of independent sets of $G$ with cardinality $k$. We also mention that, for investigating about the independence polynomials, several authors consider the {\it clique
polynomial} of the graph $G$ which is $I(\overline{G}, x)$, where $\overline{G}$ is the complement of $G$.
By $\xi(G)$ we mean the largest real root of $I(G,x)$.
As we mentioned before, in \cite{bdn} it was shown that the independence polynomials have at least one real root.
Also in~\cite{pcs} and \cite{haji} it was proved by different ways.

\begin{lem}\label{product} {\rm \cite{gh}}
Let $G$ be a graph with connected components $G_{1},\ldots,G_{k}$.
Then $I(G,x)=\prod_{i=1}^{k}I(G_{i},x)$.
\end{lem}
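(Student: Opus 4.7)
The plan is to reduce to the two-component case and then perform a straightforward convolution argument on the coefficients. By associativity of disjoint union, induction on $k$ immediately reduces the claim to showing that $I(G_1+G_2,x) = I(G_1,x)\,I(G_2,x)$, so I will focus on this step.

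To establish the product formula for two components, I would argue by counting independent sets of each fixed size. Fix an integer $m\ge 0$ and consider any $S\subseteq V(G_1)\cup V(G_2)$ with $|S|=m$. Write $S_1=S\cap V(G_1)$ and $S_2=S\cap V(G_2)$. Because $G_1+G_2$ has no edges between the vertex classes $V(G_1)$ and $V(G_2)$, the set $S$ is independent in $G_1+G_2$ if and only if $S_1$ is independent in $G_1$ and $S_2$ is independent in $G_2$. The map $S\mapsto (S_1,S_2)$ is a bijection between independent sets of size $m$ in $G_1+G_2$ and pairs $(S_1,S_2)$ of independent sets with $|S_1|+|S_2|=m$, so
\[
s(G_1+G_2,m)=\sum_{i+j=m} s(G_1,i)\,s(G_2,j).
\]

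Recognising the right-hand side as the convolution coefficient of the product of two formal power series, I then assemble the generating functions:
\[
I(G_1+G_2,x)=\sum_{m\ge 0}\Bigl(\sum_{i+j=m} s(G_1,i)\,s(G_2,j)\Bigr)x^m = \Bigl(\sum_{i\ge 0} s(G_1,i)x^i\Bigr)\Bigl(\sum_{j\ge 0} s(G_2,j)x^j\Bigr) = I(G_1,x)\,I(G_2,x).
\]
An induction on $k$, using $G_1+\cdots+G_k=(G_1+\cdots+G_{k-1})+G_k$ at the inductive step, then yields the stated identity $I(G,x)=\prod_{i=1}^k I(G_i,x)$.

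There is no real obstacle here; the only thing to be careful about is the bijective correspondence between independent sets of the disjoint union and pairs of independent sets in the components, which is immediate from the absence of cross edges. The upper index on the sums may be taken to be $\alpha(G_i)$ in each factor, matching the definition of $I(G_i,x)$, so no convergence issues arise since all sums are finite.
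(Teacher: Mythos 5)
Your proof is correct: the bijection $S\mapsto(S\cap V(G_1),S\cap V(G_2))$ gives the convolution identity for the coefficients, and induction on $k$ finishes the argument. The paper itself gives no proof, citing Gutman and Harary, and your argument is exactly the standard one behind that citation.
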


\begin{lem}\label{vertex-edge} {\rm \cite{gh}}
Let $G$ be a graph. Then the following hold:
\begin{enumerate}
\item[1)]Let $v$ be a vertex of $G$. Then $I(G,x)=I(G\setminus v,x)+xI(G\setminus[v],x)$.
\item[2)]Let $e$ be an edge of $G$. Then $I(G,x)=I(G\setminus e,x)-x^2I(G\setminus[e],x)$.
\end{enumerate}
\end{lem}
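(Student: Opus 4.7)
The plan is to prove both identities by a direct combinatorial partition of the independent sets, reading off the generating function on each side term by term.

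For part 1), I would fix the vertex $v$ and partition the independent sets of $G$ according to whether they contain $v$. The independent sets of $G$ that avoid $v$ are exactly the independent sets of $G\setminus v$, and their contribution to $I(G,x)$ is $I(G\setminus v,x)$. An independent set $S$ that contains $v$ has the form $S=\{v\}\cup S'$, where $S'$ is an independent set of $G$ disjoint from $[v]$; equivalently, $S'$ is an arbitrary independent set of $G\setminus[v]$. Since $|S|=|S'|+1$, these sets contribute $xI(G\setminus[v],x)$. Adding the two contributions gives the first identity.

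For part 2), with $e=uv$, I would instead expand $I(G\setminus e,x)$ and partition its independent sets according to whether they contain both endpoints of $e$. If at most one of $u,v$ lies in an independent set $S$ of $G\setminus e$, then $S$ is still independent in $G$, and conversely every independent set of $G$ is independent in $G\setminus e$; these sets contribute $I(G,x)$. If both $u,v\in S$, then the remaining vertices of $S$ must avoid all of $[u]\cup[v]=[e]$, so $S\mapsto S\setminus\{u,v\}$ is a bijection onto the independent sets of $G\setminus[e]$ that decreases the size by $2$; this gives the contribution $x^2 I(G\setminus[e],x)$. Hence $I(G\setminus e,x)=I(G,x)+x^2 I(G\setminus[e],x)$, and rearranging yields the second identity.

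Neither step presents a real obstacle, since each is just a bijective bookkeeping argument. The one small point to verify carefully is the bijection in the second case of part~2): one must check that $S\setminus\{u,v\}$ is indeed an independent set of $G\setminus[e]$ (which follows because $S$ contained no edges and $S$ avoided no vertex of $[e]\setminus\{u,v\}$ by independence in $G\setminus e$) and that every independent set of $G\setminus[e]$ arises from a unique such $S$ by re-adding $u$ and $v$. As an alternative route, part 2) can be derived from part 1) by applying the vertex recursion at $u$ in both $G$ and $G\setminus e$ and subtracting, which reduces the edge identity to the vertex identity.
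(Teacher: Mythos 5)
Your argument is correct: the case split in part 1) on whether an independent set contains $v$, and in part 2) the partition of the independent sets of $G\setminus e$ according to whether they contain both endpoints of $e$ (with the bijection $S\mapsto S\setminus\{u,v\}$ onto independent sets of $G\setminus[e]$), are exactly the standard proofs of these recursions. The paper itself gives no proof, citing the lemma from Gutman and Harary, and your combinatorial bookkeeping is precisely the argument that reference supplies, so there is nothing to add.
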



\begin{thm}\label{real root} {\rm \cite{pcs}}
Let $G$ be a graph and $H$ be a subgraph of $G$. Then $I(G,x)$ and $I(H,x)$ have at least one real root. Moreover, $\xi(G)\geq \xi(H)$, where $\xi(G)$ and $\xi(H)$ are the largest real root of $I(G,x)$ and $I(H,x)$, respectively.
\end{thm}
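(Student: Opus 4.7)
The plan is to handle the two assertions separately. The first — existence of a real root of $I(G,x)$ for every graph $G$ — is the main theorem of \cite{bdn} (with alternate proofs in \cite{pcs,haji}), which I would quote directly. Everything substantive concerns the comparison $\xi(G)\ge\xi(H)$. I would reduce it to two atomic cases: $\xi(G)\ge\xi(G\setminus v)$ for every $v\in V(G)$, and $\xi(G)\ge\xi(G\setminus e)$ for every $e\in E(G)$. Since a general subgraph $H$ arises from $G$ by a finite chain of such deletions and the relation $\xi(\cdot)\ge\xi(\cdot)$ is transitive, these two cases imply the theorem. Both would be treated together by strong induction on $|V(G)|+|E(G)|$; the base case $|V(G)|\le 1$ is immediate.

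For the vertex-deletion case, I would rewrite Lemma~\ref{vertex-edge}(1) as $I(G\setminus v,x)=I(G,x)-x\,I(G\setminus[v],x)$ and argue by contradiction, assuming $\xi(G\setminus v)>\xi(G)$. Because $I(G,0)=1$ and $\xi(G)$ is the largest real root of $I(G,\cdot)$, the polynomial $I(G,\cdot)$ is strictly positive on $(\xi(G),0]$; in particular $I(G,\xi(G\setminus v))>0$. Evaluating the identity at $x=\xi(G\setminus v)$ yields
\begin{equation*}
\xi(G\setminus v)\,I(G\setminus[v],\xi(G\setminus v))=I(G,\xi(G\setminus v))>0,
\end{equation*}
and since $\xi(G\setminus v)<0$ (every real root of an independence polynomial is negative, the coefficients being nonnegative with constant term $1$), this forces $I(G\setminus[v],\xi(G\setminus v))<0$. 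On the other hand $G\setminus[v]$ is a subgraph of $G\setminus v$ with $|V(G\setminus v)|+|E(G\setminus v)|<|V(G)|+|E(G)|$, so the induction hypothesis applied to the pair $(G\setminus v,G\setminus[v])$ gives $\xi(G\setminus v)\ge\xi(G\setminus[v])$; the same positivity argument as above then forces $I(G\setminus[v],\xi(G\setminus v))\ge 0$, a contradiction.

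The edge-deletion case is parallel. I would rewrite Lemma~\ref{vertex-edge}(2) as $I(G\setminus e,x)=I(G,x)+x^2\,I(G\setminus[e],x)$; assuming $\xi(G\setminus e)>\xi(G)$, the same argument gives $I(G,\xi(G\setminus e))>0$, and because $\xi(G\setminus e)^2>0$ the identity forces $I(G\setminus[e],\xi(G\setminus e))<0$. This contradicts the induction hypothesis $\xi(G\setminus e)\ge\xi(G\setminus[e])$, which applies because $G\setminus[e]\subseteq G\setminus e$ and the latter is strictly smaller than $G$ in $|V|+|E|$.

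The main obstacle, such as it is, is bookkeeping: verifying that $G\setminus[v]$ and $G\setminus[e]$ are genuine subgraphs of the one-step graphs with strictly smaller $|V|+|E|$, and handling degenerate cases where these graphs have no vertices (then $I\equiv 1>0$ and the sign contradiction is immediate). The conceptual heart of the argument is the observation that the two identities of Lemma~\ref{vertex-edge} are sign-coherent: evaluating at a hypothetical "too-large" largest root of the smaller graph $G\setminus v$ or $G\setminus e$ produces a sign obstruction on the still smaller graph $G\setminus[v]$ or $G\setminus[e]$, which is precisely what the induction hypothesis rules out.
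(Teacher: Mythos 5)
Your proposal is correct, but note that the paper itself contains no proof of this statement to compare against: Theorem~\ref{real root} is imported verbatim from Csikv\'ari \cite{pcs} (with the existence of a real root also attributed to \cite{bdn} and \cite{haji} in Section~2), so what you have written is a self-contained substitute rather than a reconstruction of an argument in the text. Your treatment of the existence part matches the paper's (it is quoted from \cite{bdn}), and your inductive sign argument for the monotonicity $\xi(G)\geq\xi(H)$ is sound: the reduction of an arbitrary subgraph to a chain of single vertex/edge deletions plus transitivity is legitimate, the evaluations of the two identities of Lemma~\ref{vertex-edge} at a hypothetical root $\xi(G\setminus v)>\xi(G)$ (resp.\ $\xi(G\setminus e)>\xi(G)$) do force $I(G\setminus[v],\cdot)<0$ (resp.\ $I(G\setminus[e],\cdot)<0$) at that point, and the induction hypothesis together with $I(\cdot,0)=1$ and continuity rules this out; the degenerate case where $G\setminus[v]$ or $G\setminus[e]$ has no vertices is correctly disposed of since then the polynomial is identically $1$. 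One small expository wrinkle: you announce the induction hypothesis as the two atomic cases for smaller graphs but then invoke it in the form of the general comparison for the pair $(G\setminus v,G\setminus[v])$; this is harmless because the atomic cases at all smaller sizes yield the general comparison by the same transitivity argument, but it is worth saying explicitly. It is also worth observing that your argument does not create circularity with the paper: Theorem~\ref{subgraph} of the paper is deduced \emph{from} Theorem~\ref{real root} by essentially the same positivity-on-$[\xi(G),0]$ reasoning you use, and you rely only on Lemma~\ref{vertex-edge} and the cited existence result, not on anything proved later.
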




\begin{thm}\label{connected} {\rm \cite{pcs}}
Let $G$ be a connected graph and $H$ be a subgraph of $G$. Then the following hold:
\begin{enumerate}
\item[1)]$\xi(G)\geq\xi(H)$, and the equality holds if and only if $G=H$.
\item[2)]The multiplicity of $\xi(G)$ is 1.
\end{enumerate}
\end{thm}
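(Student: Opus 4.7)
The plan is to prove (1) by strong induction on $|V(G)|+|E(G)|$, and then derive (2) from (1) via a standard derivative identity. Throughout, write $\alpha = \xi(G)$ and recall $\alpha < 0$.

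I would first reduce (1) to single-step deletions. Given a proper subgraph $H \subsetneq G$, either there is a vertex $v \in V(G)\setminus V(H)$, so that $H \subseteq G\setminus v$, or $V(H)=V(G)$ and there is an edge $e \in E(G)\setminus E(H)$, so that $H \subseteq G\setminus e$. Theorem \ref{real root} gives $\xi(H) \leq \xi(G\setminus v)$ or $\xi(H) \leq \xi(G\setminus e)$ respectively, so it suffices to prove $\xi(G\setminus v) < \alpha$ for every vertex $v$ and $\xi(G\setminus e) < \alpha$ for every edge $e$ of the connected graph $G$.

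For the vertex case, argue by contradiction: if $\xi(G\setminus v)=\alpha$, then evaluating Lemma \ref{vertex-edge}(1) at $\alpha$ yields $0 = I(G,\alpha) = I(G\setminus v,\alpha) + \alpha I(G\setminus[v],\alpha) = \alpha I(G\setminus[v],\alpha)$, forcing $I(G\setminus[v],\alpha)=0$ and hence $\xi(G\setminus[v])=\alpha$ (combined with Theorem \ref{real root}). If $G\setminus v$ is connected, then it has smaller $|V|+|E|$, and its proper subgraph $G\setminus[v]$ (proper because $G$ is connected and so $v$ has a neighbor) must satisfy the strict inequality by induction, contradicting $\xi(G\setminus[v])=\alpha$. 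If $G\setminus v$ is disconnected with components $G_1,\ldots,G_r$, then connectedness of $G$ forces $N_i:=N(v)\cap V(G_i)$ to be non-empty for every $i$, and $G\setminus[v]=\bigsqcup_i(G_i-N_i)$. Some $G_j$ has $\xi(G_j)=\alpha$ and some $G_k-N_k$ has $\xi(G_k-N_k)=\alpha$; sandwiching $\alpha=\xi(G_k-N_k)\leq\xi(G_k)\leq\xi(G\setminus v)=\alpha$ gives $\xi(G_k)=\alpha$, and since $G_k$ is a smaller connected graph with proper subgraph $G_k-N_k$ (as $N_k\neq\emptyset$), the induction hypothesis yields a contradiction. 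The edge case is parallel: Lemma \ref{vertex-edge}(2) gives $I(G\setminus[e],\alpha)=0$, and one splits on whether $G\setminus e$ is connected or else a bridge deletion creating two components $G_1,G_2$, applying induction to the component realizing the maximum.

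For (2), I would use the identity
\[
I'(G,x)=\sum_{v\in V(G)}I(G\setminus[v],x),
\]
which follows from differentiating $I(G,x)=\sum_k s(G,k)x^k$ and the double count $k\,s(G,k)=\sum_v s(G\setminus[v],k-1)$ (each independent $k$-set is counted once for each of its members). Evaluated at $\alpha$, part (1) applied to the connected graph $G$ and each of its proper subgraphs $G\setminus[v]$ gives $\xi(G\setminus[v])<\alpha$, hence $I(G\setminus[v],\alpha)>0$; summing, $I'(G,\alpha)>0$, so $\alpha$ is a simple root.

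The main obstacle is the disconnected case in (1): a naive induction on $|V(G)|$ alone breaks down for edge deletion because $G\setminus e$ has the same vertex count as $G$. The fix is the joint induction on $|V|+|E|$ combined with the bridge analysis, and the crucial use of connectedness of $G$ is the fact that every component of the deletion must meet the neighborhood of the deleted vertex (or an endpoint of the deleted edge), which is what lets us locate a smaller connected graph where the induction hypothesis applies.
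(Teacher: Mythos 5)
Your proof is correct, but there is nothing in the paper to compare it with: Theorem~\ref{connected} is stated with the citation \cite{pcs} and the paper never proves it, so your argument is a self-contained derivation of a quoted result, using only tools the paper does record (Lemma~\ref{product}, Lemma~\ref{vertex-edge}, Theorem~\ref{real root}). The structure is sound: the reduction of an arbitrary proper subgraph to a single vertex- or edge-deletion, the joint induction on $|V|+|E|$, the key step that $I(G,\xi(G))=0$ together with $I(G\setminus v,\xi(G))=0$ (resp. $I(G\setminus e,\xi(G))=0$) forces $I(G\setminus[v],\xi(G))=0$ (resp. $I(G\setminus[e],\xi(G))=0$), and, in the disconnected case, the sandwich $\xi(G_k\setminus N_k)\leq\xi(G_k)\leq\xi(G\setminus v)$ applied to the component whose closed-neighborhood part actually vanishes at $\xi(G)$, which is where connectivity of $G$ (every component meets $N(v)$, resp.\ an endpoint of $e$) enters. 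The derivative identity $I'(G,x)=\sum_{v\in V(G)}I(G\setminus[v],x)$ is the standard double count and does give simplicity of $\xi(G)$ once part (1) yields $I(G\setminus[v],\xi(G))>0$ for each $v$. Two small points you should make explicit, though neither is a gap: when you pass from $I(G\setminus[v],\xi(G))=0$ to $\xi(G\setminus[v])=\xi(G)$ (and likewise for $G\setminus[e]$ and for the component $G_k\setminus N_k$), you implicitly use that the graph in question is nonempty — this is automatic because the independence polynomial of the empty graph is the constant $1$ and cannot vanish, but the induction hypothesis only applies to nonempty proper subgraphs, so it deserves a sentence; and in part (2) the vertices $v$ with $G\setminus[v]$ empty contribute $1>0$ to the sum rather than falling under part (1), which keeps $I'(G,\xi(G))>0$ in all cases, including $G=K_1$.
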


\section{A new ordering on the family of simple graphs}

By Theorem~\ref{real root}, the independence polynomial has a real root. Let {\it${\cal A}$ {\rm be the set of all simple graphs. We define an ordering $\succeq$ on} {\it${\cal A}$. {\rm Let $G$ and $H$ be two graphs. Let $\xi(G)$ and $\xi(H)$ be the largest real root of $I(G,x)$ and $I(H,x)$, respectively. We define $$G\succeq H,\, {\rm if\, and\, only\, if }\,\, I(H,x)\geq I(G,x),\,\, {\rm for\, every\,} x\in[\xi(G),0].$$
Also, we let $G\succ H$, if and only if $G\succeq H$ and $I(G,x)\neq I(H,x)$. In other words, $G\succ H$, if and only if $G\succeq H$ and $G,H$ have different independence polynomials. Note that $G\succ H$, implies that there exist $x_0\in[\xi(G),0)$ such that $I(H,x_0)>I(G,x_0)$.}

\begin{remark}\label{root greater} Let $G\succeq H$. Since for every $x\in(\xi(G),0]$, $I(G,x)>0$, thus $I(H,x)>0$ on the interval $(\xi(G),0]$. This shows that $\xi(G)\geq\xi(H)$.
\end{remark}

\begin{remark}\label{total order} $(\it{\cal A},\succeq)$ is not a total order set. Consider the graphs $K_2$ and $3K_1$.
We have $I(K_2,x)=1+2x$ and $I(3K_1,x)=(1+x)^3$. So $\xi(K_2)=-\frac{1}{2}$ and $\xi(3K_1)=-1$. One can see that the graphs $3K_1$ and $K_2$ are not comparable, that is $3K_1\nsucceq K_2$ and $K_2\nsucceq 3K_1$.
\end{remark}
{\rm The following result shows that} $(\it{\cal A},\succeq)$ {\rm is  a partially order set. Note that the antisymmetric  property holds up to} {\it${\cal I}$- equivalent}.
\begin{thm}\label{partial order} $(\it{\cal A},\succeq)$ is  a poset.
\end{thm}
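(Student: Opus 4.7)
The plan is to verify the three axioms of a partial order, treating $\mathcal{I}$-equivalence as the equivalence we mod out by (as the paper indicates).

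Reflexivity is immediate: $I(G,x)\geq I(G,x)$ trivially holds on $[\xi(G),0]$, so $G\succeq G$.

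For antisymmetry (up to $\mathcal{I}$-equivalence), suppose $G\succeq H$ and $H\succeq G$. By Remark~\ref{root greater}, $G\succeq H$ gives $\xi(G)\geq\xi(H)$ and $H\succeq G$ gives $\xi(H)\geq\xi(G)$, hence $\xi(G)=\xi(H)$. Then both defining inequalities hold on the same interval $[\xi(G),0]=[\xi(H),0]$, yielding $I(G,x)=I(H,x)$ for every $x$ in this interval. Since $I(G,x)$ and $I(H,x)$ are polynomials agreeing on an interval of positive length, they are equal as polynomials, so $G$ and $H$ are $\mathcal{I}$-equivalent.

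The key step is transitivity: assume $G\succeq H$ and $H\succeq K$, and we want $G\succeq K$. By Remark~\ref{root greater} we have $\xi(G)\geq\xi(H)$, hence $[\xi(G),0]\subseteq[\xi(H),0]$. Fix any $x\in[\xi(G),0]$. Then $x\in[\xi(H),0]$, so by $H\succeq K$ we get $I(K,x)\geq I(H,x)$; and by $G\succeq H$ we already have $I(H,x)\geq I(G,x)$. Chaining these inequalities gives $I(K,x)\geq I(G,x)$ on all of $[\xi(G),0]$, which is exactly the definition of $G\succeq K$.

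There is no genuine obstacle here; the only subtlety is making sure the interval containment $[\xi(G),0]\subseteq[\xi(H),0]$ is in the correct direction, which it is precisely because Remark~\ref{root greater} guarantees that $\succeq$ is monotone with respect to the largest real root. The antisymmetry clause relies on the elementary fact that two polynomials coinciding on an interval are identically equal, so no analytic machinery beyond what the paper has already set up is required.
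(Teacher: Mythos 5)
Your proof is correct and follows essentially the same route as the paper: reflexivity is trivial, antisymmetry uses Remark~\ref{root greater} to get $\xi(G)=\xi(H)$ and then the fact that two polynomials agreeing on a nondegenerate interval coincide (giving ${\cal I}$-equivalence), and transitivity uses $\xi(G)\geq\xi(H)$ to chain the inequalities on $[\xi(G),0]$. You merely spell out the chaining step for transitivity more explicitly than the paper does, which is a harmless (indeed helpful) elaboration rather than a different argument.
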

\begin{proof}{Let $G,H$ and $K$ be some  graphs. Obviously, $G\succeq G$. Suppose that $G\succeq H$ and  $H\succeq G$. By Remark~\ref{root greater}, $\xi(G)=\xi(H)$. Thus $I(G,x)=I(H,x)$ for every $x\in [\xi(G),0]$. This shows that the polynomial $I(G,x)-I(H,x)$ has infinity number  roots. Thus $I(G,x)-I(H,x)\equiv 0$. So $I(G,x)=I(H,x)$ for every real number $x$, that is $G$ and $H$ are {\it${\cal I}$-equivalent}. Now, suppose that $G\succeq H$ and $H\succeq K$. Using Remark~\ref{root greater}, we conclude that $\xi(G)\geq\xi(H),\xi(K)$. This completes the proof.}
\end{proof}

\begin{remark}\label{cycle-star} There are some non-isomorphic {\it${\cal I}$-equivalent} graphs. For instance it is not hard to see that the cycle $C_n$ and $G_n$, where $G_n$ is the graph with the vertex set $\{1,\ldots,n\}$ and the edge set $\{12,23,34,\ldots,(n-1)n\}\cup\{(n-2)n\}$, have the same independence polynomial \cite{mro}. See Figure~\ref{Gn}.
\end{remark}
\begin{figure}[htb]
\centering
\begin{tikzpicture}[scale=1]
\filldraw [black]
(-3,0) circle (3.5 pt)
(-2,0) circle (3.5 pt)
(-1.5,0) circle (1 pt)
(-1,0) circle (1 pt)
(-0.5,0) circle (1 pt)
(0,0) circle (3.5 pt)
(1,0.5) circle (3.5 pt)
(1,-0.5) circle (3.5 pt);
\node [label=below:$G_n$] (G_n) at (-0.65,-0.75) {};
\node [label=above:$1$] (1) at (-3,0) {};
\node [label=above:$2$] (2) at (-2,0) {};
\node [label=above:$n-2$] (n-2) at (0,0) {};
\node [label=right:$n-1$] (n-1) at (1,0.5) {};
\node [label=right:$n$] (n) at (1,-0.5) {};
\draw[thick] (-3,0) --(-2,0);
\draw[thick] (-2,0) -- (-1.6,0);
\draw[thick] (-0.4,0) -- (0,0);
\draw[thick] (0,0) -- (1,0.5);
\draw[thick] (0,0) -- (1,-0.5);
\draw[thick] (1,0.5) -- (1,-0.5);
\end{tikzpicture}
\caption{$I(G_n,x)=I(C_n,x).$}\label{Gn}
\end{figure}
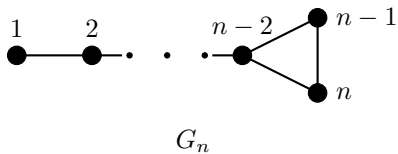
{\rm We guess that the set of all graphs (trees) with $n$ vertices is a total order set with respect to the ordering $\succeq$.}

\begin{conj}\label{total-tree} Let $T_1$ and $T_2$ be two trees of order $n$. Then $T_1\succeq T_2$ or $T_2\succeq T_1$.
\end{conj}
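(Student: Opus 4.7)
The natural approach is induction on $n=|V(T_1)|=|V(T_2)|$, combined with the leaf-deletion identity of Lemma~\ref{vertex-edge}(1). The base cases $n\le 4$ are immediate: for $n\le 3$ a unique tree of each order exists, and for $n=4$ the only trees are $P_4$ and $S_4$, which are comparable by the theorem $S_n\succeq T\succeq P_n$ promised in the abstract. For the inductive step, given trees $T_1,T_2$ of order $n$, I would pick leaves $v_i\in T_i$ with neighbors $u_i$ and write
\begin{equation*}
I(T_i,x)=I(T_i-v_i,x)+x\,I(T_i-\{u_i,v_i\},x),\qquad i=1,2.
\end{equation*}
The trees $T_i-v_i$ have order $n-1$ and are comparable by the inductive hypothesis.

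If the ``forest terms'' $F_i:=T_i-\{u_i,v_i\}$ were also comparable \emph{in the same direction}, the recurrence would close after multiplying the second summand by $x\le 0$ and using Remark~\ref{root greater} to align the intervals. The genuine obstruction is that the analogue of the conjecture is false for forests: a direct computation gives
\begin{equation*}
I(2P_3,x)-I(P_4+2K_1,x)=-x^{2}\bigl(2x^{2}+4x+1\bigr),
\end{equation*}
whose sign changes on $[\xi(2P_3),0]\approx[-0.382,0]$ (the quadratic factor vanishes at $x=-1+\tfrac{\sqrt 2}{2}\approx -0.293$), so $2P_3$ and $P_4+2K_1$ are incomparable. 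Thus the induction cannot proceed through arbitrary forests.

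To circumvent this, I would choose the leaves $v_1,v_2$ adaptively via a canonical structural rule---say a leaf at the end of a longest path in each $T_i$, or a leaf adjacent to a vertex of maximum degree---so that the forests $F_1,F_2$ lie in a restricted class for which a coupled comparison can be established. The goal is to prove simultaneously $T_1-v_1\succeq T_2-v_2$ and $F_1\succeq F_2$ (or both reversed), from which $T_1\succeq T_2$ then follows by sign analysis on $[\min_i\xi(T_i),0]$. A parallel route is to construct an explicit real-valued invariant $\rho$ on the set of trees of order $n$---for example, a weighted integral of $I(T,\cdot)$ over $[\xi(S_n),0]$, or a refinement of $\xi(T)$ using the majorization ordering on starlike trees developed in Section~6---such that $T_1\succeq T_2\iff \rho(T_1)\ge\rho(T_2)$; totality would then be automatic. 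The hardest step will be isolating the tree-specific rigidity that rules out the forest-style sign oscillation above: plausibly this rigidity comes from Theorem~\ref{connected} (simplicity of $\xi$ for connected graphs) together with the uniqueness of paths in a tree, but converting this into a clean positivity statement about $I(T_1,x)-I(T_2,x)$ is the core difficulty, and why the statement remains a conjecture rather than a theorem.
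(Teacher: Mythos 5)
You were asked to prove what is, in the paper, Conjecture~\ref{total-tree}: the paper itself contains no proof of this statement, only partial evidence, namely Corollary~\ref{star path} ($S_n\succeq T\succeq P_n$ with equality characterized) and Theorem~\ref{main theorem}, which yields comparability of two starlike trees $T(n_1,\ldots,n_k)$ and $T(m_1,\ldots,m_k)$ only when one tuple dominates the other in the order $\succeq_d$; since $\succeq_d$ is not total, even the starlike case of full comparability is left open there (Conjecture~\ref{starlike and d_G}). Your proposal is likewise not a proof, and you say so yourself: the two escape routes you sketch (an adaptive choice of leaves forcing the forests $F_1,F_2$ to be comparable in the same direction, or a real invariant $\rho$ with $T_1\succeq T_2\iff\rho(T_1)\ge\rho(T_2)$) are never constructed, and the second is essentially a restatement of totality rather than a tool for establishing it. So the genuine gap is that the required sign statement for $I(T_1,x)-I(T_2,x)$ on $[\min_i\xi(T_i),0]$ is nowhere proved --- but this is precisely the gap the paper also leaves open, so your attempt does not fall short of anything the paper achieves; it stalls where the problem is genuinely open.

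Your diagnostic content is correct and worth keeping. The identity $I(2P_3,x)-I(P_4+2K_1,x)=-x^{2}(2x^{2}+4x+1)$ checks out, $\xi(2P_3)=\frac{-3+\sqrt{5}}{2}\approx-0.382$, the quadratic factor vanishes at $-1+\frac{\sqrt{2}}{2}\approx-0.293$ inside the relevant interval, and one verifies that neither $2P_3\succeq P_4+2K_1$ nor $P_4+2K_1\succeq 2P_3$ holds; hence the conjecture really does fail for forests and a blind leaf-deletion induction cannot close, which correctly explains why the statement is a conjecture. If you want partial progress in the paper's spirit, the productive mechanism is not leaf deletion by itself but the coupled comparisons of Theorems~\ref{two part} and~\ref{two part strict}: choose a vertex or edge in each tree whose deletion yields equal (or comparable) graphs while the closed-neighborhood deletions compare in the opposite direction. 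That is exactly how the paper proves $H_{n,k}\succeq T\succeq T_{n,k}$ (via the operation $\star$ of Theorem~\ref{star operation}) and the dominance result for starlike trees, and it is the natural framework in which any extension of comparability should be attempted.
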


{\rm In the last section we prove Conjecture~\ref{total-tree} for starlike trees. In sequel we obtain some results about the ordering $\succeq$. One can easily  prove the following theorem.}

\begin{thm}\label{plus minus} Let $G_1,G_2,H_1,H_2$ and $H$ be some graphs. Then the following hold:
\begin{enumerate}
\item[1)] If $G_1\succeq G_2$  and $H_1\succeq H_2$, then $G_1+H_1\succeq G_2+H_2$.
\item[2)] If $G_1+H\succeq G_2+H$ and $\xi(G_1)\geq \xi(H)$, then $G_1\succeq G_2$.
\end{enumerate}
\end{thm}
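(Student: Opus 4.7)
The plan rests on Lemma~\ref{product}, which gives $I(A+B,x)=I(A,x)\,I(B,x)$, together with the elementary fact that the largest real root of a product is the maximum of the largest real roots, so $\xi(A+B)=\max(\xi(A),\xi(B))$. I also use throughout that $I(G,x)>0$ for $x\in(\xi(G),0]$, since $\xi(G)$ is the largest real root and $I(G,0)=1$.

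For part~1, I would first reduce by symmetry to the case $\xi(G_1)\geq\xi(H_1)$, so that $\xi(G_1+H_1)=\xi(G_1)$; the goal then is to show $I(G_2,x)\,I(H_2,x)\geq I(G_1,x)\,I(H_1,x)$ on $[\xi(G_1),0]$. By Remark~\ref{root greater}, the hypothesis $G_1\succeq G_2$ gives $\xi(G_2)\leq\xi(G_1)$ together with $I(G_2,x)\geq I(G_1,x)\geq 0$ on $[\xi(G_1),0]$. The hypothesis $H_1\succeq H_2$ gives $I(H_2,x)\geq I(H_1,x)\geq 0$ on $[\xi(H_1),0]$; since $\xi(G_1)\geq\xi(H_1)$ (and both are negative), the inclusion $[\xi(G_1),0]\subseteq[\xi(H_1),0]$ holds, and the inequality descends to the smaller interval. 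Multiplying the two nonnegative inequalities termwise yields the desired bound.

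For part~2, the hypothesis $\xi(G_1)\geq\xi(H)$ gives $\xi(G_1+H)=\xi(G_1)$, so $G_1+H\succeq G_2+H$ unfolds, via Lemma~\ref{product}, as
\[
I(G_2,x)\,I(H,x)\geq I(G_1,x)\,I(H,x)\qquad\text{for every }x\in[\xi(G_1),0].
\]
On the half-open interval $(\xi(G_1),0]$ we have $x>\xi(G_1)\geq\xi(H)$, hence $I(H,x)>0$, and dividing gives $I(G_2,x)\geq I(G_1,x)$ there. Continuity of both sides extends the inequality to the left endpoint $x=\xi(G_1)$, which is exactly $G_1\succeq G_2$.

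The argument is essentially bookkeeping with intervals and signs; the mildest subtle point is the boundary case $\xi(G_1)=\xi(H)$ in part~2, where $I(H,x)$ vanishes at the left endpoint so division is illegal there and one must invoke continuity from the right instead. A symmetric concern in part~1, namely that the $H_1\succeq H_2$ inequality is only supplied on $[\xi(H_1),0]$, dissolves because the WLOG reduction puts the working interval inside $[\xi(H_1),0]$ rather than outside it.
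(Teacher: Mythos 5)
Your proof is correct, and it is the natural argument the paper has in mind: the paper states Theorem~\ref{plus minus} without proof (``One can easily prove the following theorem''), and your use of Lemma~\ref{product}, the identity $\xi(A+B)=\max(\xi(A),\xi(B))$, multiplication of nonnegative inequalities on the common interval, and division followed by continuity at the left endpoint supplies exactly the missing details, including the boundary case $\xi(G_1)=\xi(H)$ in part~2.
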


\begin{thm}\label{subgraph} Let $G$ be a graph and $H$ be a proper subgraph of $G$. Then $G\succ H$.
\end{thm}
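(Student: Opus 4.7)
The plan is to reduce to the two elementary cases $H=G\setminus v$ and $H=G\setminus e$, then pass to the general proper subgraph by an induction on the ``distance'' $|V(G)\setminus V(H)|+|E(G)\setminus E(H)|$, and finally check that the inequality is strict.

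For the base case $H=G\setminus v$, Lemma~\ref{vertex-edge}(1) gives
\[
I(H,x)-I(G,x)=-x\,I(G\setminus[v],x).
\]
On $[\xi(G),0]$ the factor $-x$ is nonnegative, so it suffices to show $I(G\setminus[v],x)\ge 0$ there. Since $G\setminus[v]$ is a subgraph of $G$, Theorem~\ref{real root} yields $\xi(G\setminus[v])\le\xi(G)$, so $[\xi(G),0]\subseteq[\xi(G\setminus[v]),0]$. Because $\xi(G\setminus[v])$ is the \emph{largest} real root of $I(G\setminus[v],\cdot)$ and $I(G\setminus[v],0)=1>0$, continuity forces $I(G\setminus[v],x)\ge 0$ on $[\xi(G\setminus[v]),0]$, hence on $[\xi(G),0]$. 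The base case $H=G\setminus e$ is handled analogously using Lemma~\ref{vertex-edge}(2), where the correction term $x^{2}I(G\setminus[e],x)$ is manifestly nonnegative on $[\xi(G),0]$ by the same argument.

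For the inductive step, given a proper subgraph $H\subsetneq G$, choose an intermediate graph $G'$ with $H\subseteq G'\subsetneq G$, obtained from $G$ by deleting either a single vertex in $V(G)\setminus V(H)$ (if $V(H)\subsetneq V(G)$) or a single edge in $E(G)\setminus E(H)$ (if $V(H)=V(G)$ and $E(H)\subsetneq E(G)$). The base case gives $G\succ G'$, while the inductive hypothesis (or equality) gives $G'\succeq H$. Transitivity of $\succeq$, established in Theorem~\ref{partial order}, then yields $G\succeq H$; concretely, the inequality $I(H,x)\ge I(G',x)$ on $[\xi(G'),0]\supseteq[\xi(G),0]$ combines with $I(G',x)\ge I(G,x)$ on $[\xi(G),0]$. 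For the strict part, note that $H\ne G$ implies either $|V(H)|<|V(G)|$, so the coefficients $s(\cdot,1)$ of $I(G,x)$ and $I(H,x)$ differ, or else $V(H)=V(G)$ and $|E(H)|<|E(G)|$, so the coefficients $s(\cdot,2)=\binom{n}{2}-|E(\cdot)|$ differ. Either way $I(G,x)\not\equiv I(H,x)$, hence $G\succ H$. The only subtlety is keeping the intermediate graphs consistent in the inductive peel-off, but this is automatic because we always delete a vertex or edge that is absent from $H$, so $H\subseteq G'$ is preserved at every stage.
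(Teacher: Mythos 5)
Your proposal is correct and follows essentially the same route as the paper: reduce to the single‑vertex and single‑edge deletions via Lemma~\ref{vertex-edge}, use Theorem~\ref{real root} together with the sign of the independence polynomial on $[\xi(G),0]$ to get $G\succeq G\setminus v$ and $G\succeq G\setminus e$, and conclude by transitivity, with strictness coming from $I(G,x)\neq I(H,x)$ (which you justify, slightly more explicitly than the paper, via the coefficients $s(\cdot,1)$ and $s(\cdot,2)$).
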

\begin{proof}{Since $H$  is a proper subgraph of $G$, $I(G,x)\neq I(H,x)$. Thus it remains to show that $G\succeq H$. Since $\succeq$ is a transitive relation, it suffices to show that for every vertex $v$ and edge $e$ of $G$, $G\succeq G\setminus v$ and $G\succeq G\setminus e$. First, we prove that $G\succeq G\setminus v$. Since $G\setminus[v]$ is a subgraph of $G$ by Theorem~\ref{real root}, $\xi(G)\geq\xi(G\setminus[v])$. Thus by the mean value Theorem for continuous functions, we conclude that $I(G\setminus[v],x)\geq 0$, on the interval $[\xi(G),0]$. Now, using the first part of Lemma~\ref{vertex-edge}, we obtain that $ I(G\setminus v,x)\geq I(G,x)$ on the interval $[\xi(G),0]$. Thus $G\succeq G\setminus v$. Similarly, one can prove that $G\succeq G\setminus e$.}
\end{proof}

\begin{thm}\label{two part} Let $G$ and $H$ be two graphs. Let $u\in V(G)$, $v\in V(H)$ and $e\in E(G)$, $e\,'\in E(H)$. Then the following hold:
\begin{enumerate}
\item[1)] If $G\setminus u\succeq H\setminus v$ and $H\setminus [v]\succeq G\setminus [u]$, then $G\succeq H$.
\item[2)] If $G\setminus e\succeq H\setminus e\,'$ and $H\setminus [e\,']\succeq G\setminus [e]$, then $G\succeq H$.
\end{enumerate}
\end{thm}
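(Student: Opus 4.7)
The plan is to use Lemma~\ref{vertex-edge} to expand both $I(G,x)$ and $I(H,x)$, subtract, and then verify that each piece has the correct sign on the interval $[\xi(G),0]$ using the two hypotheses together with the monotonicity of $\xi$ under taking subgraphs (Theorem~\ref{real root}).

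For part (1), I would write
\begin{align*}
I(H,x)-I(G,x)=\bigl[I(H\setminus v,x)-I(G\setminus u,x)\bigr]+x\bigl[I(H\setminus[v],x)-I(G\setminus[u],x)\bigr].
\end{align*}
Set $A(x)=I(H\setminus v,x)-I(G\setminus u,x)$ and $B(x)=I(G\setminus[u],x)-I(H\setminus[v],x)$; then $I(H,x)-I(G,x)=A(x)-xB(x)$. The hypothesis $G\setminus u\succeq H\setminus v$ gives $A(x)\geq 0$ on $[\xi(G\setminus u),0]$, and the hypothesis $H\setminus[v]\succeq G\setminus[u]$ gives $B(x)\geq 0$ on $[\xi(H\setminus[v]),0]$. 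On $[\xi(G),0]$ one has $x\leq 0$, so it suffices to check that the interval $[\xi(G),0]$ lies inside both $[\xi(G\setminus u),0]$ and $[\xi(H\setminus[v]),0]$.

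The inclusion $[\xi(G),0]\subseteq[\xi(G\setminus u),0]$ is immediate from Theorem~\ref{real root}, since $G\setminus u$ is a subgraph of $G$. The main (and only) subtle point is the second inclusion $[\xi(G),0]\subseteq[\xi(H\setminus[v]),0]$, i.e.\ the inequality $\xi(G)\geq\xi(H\setminus[v])$. I would obtain this by chaining four monotonicity facts: Remark~\ref{root greater} applied to $G\setminus u\succeq H\setminus v$ yields $\xi(G\setminus u)\geq\xi(H\setminus v)$; Theorem~\ref{real root} applied to the subgraph inclusion $H\setminus[v]\subseteq H\setminus v$ yields $\xi(H\setminus v)\geq\xi(H\setminus[v])$; and $\xi(G)\geq\xi(G\setminus u)$ again by Theorem~\ref{real root}. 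Combining these gives $\xi(G)\geq\xi(H\setminus[v])$, so $A(x)\geq 0$ and $B(x)\geq 0$ throughout $[\xi(G),0]$, hence $A(x)-xB(x)\geq 0$ there, which is exactly $G\succeq H$.

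Part (2) follows the same scheme with the edge-deletion identity. Using Lemma~\ref{vertex-edge}(2),
\begin{align*}
I(H,x)-I(G,x)=\bigl[I(H\setminus e',x)-I(G\setminus e,x)\bigr]+x^{2}\bigl[I(G\setminus[e],x)-I(H\setminus[e'],x)\bigr].
\end{align*}
Now $x^{2}\geq 0$ automatically, so both bracketed terms need only be nonnegative on $[\xi(G),0]$. The same argument applies: $G\setminus e\succeq H\setminus e'$ controls the first bracket on $[\xi(G\setminus e),0]\supseteq[\xi(G),0]$, and for the second bracket the chain $\xi(G)\geq\xi(G\setminus e)\geq\xi(H\setminus e')\geq\xi(H\setminus[e'])$ (using Remark~\ref{root greater} together with the subgraph inclusions $G\setminus e\subseteq G$ and $H\setminus[e']\subseteq H\setminus e'$) shows that $[\xi(G),0]\subseteq[\xi(H\setminus[e']),0]$, where the hypothesis $H\setminus[e']\succeq G\setminus[e]$ makes the second bracket nonnegative. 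The one place that truly requires care in both parts is this chaining step, because a naive reading of the hypotheses gives inequalities only on intervals anchored at $\xi(G\setminus u)$ or $\xi(H\setminus[v])$ rather than at $\xi(G)$; the subgraph comparison $H\setminus[v]\subseteq H\setminus v$ is what bridges the gap.
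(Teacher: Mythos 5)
Your proof is correct and follows essentially the same route as the paper: expand $I(G,x)$ and $I(H,x)$ via Lemma~\ref{vertex-edge}, apply the two hypotheses to the resulting brackets, and use Theorem~\ref{real root} together with Remark~\ref{root greater} to see that both inequalities hold on all of $[\xi(G),0]$. If anything, your explicit chain $\xi(G)\geq\xi(G\setminus u)\geq\xi(H\setminus v)\geq\xi(H\setminus[v])$ justifies the interval containment for the second bracket more carefully than the paper's terse ``similarly'' step, which anchors its chain at $\xi(H)$.
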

\begin{proof}{
\

\begin{enumerate}
\item[1)] By Theorem~\ref{subgraph}, $G\succeq G\setminus u$. On the other hand, $G\setminus u\succeq H\setminus v$.
 Thus by Theorem~\ref{real root}, $\xi(G)\geq\xi(G\setminus u)\geq\xi(H\setminus v)$. So
 \begin{equation}\label{1-two part}
\hbox{$I(H\setminus v,x)\geq I(G\setminus u,x)$ on the interval $[\xi(G),0]$.}
\end{equation}
Similarly, we obtain $\xi(H)\geq\xi(H\setminus [v])\geq\xi(G\setminus [u])$ and
\begin{equation}\label{2-two part}
\hbox{$I(G\setminus[u],x)\geq I(H\setminus[v],x)$ on the interval $[\xi(G),0]$.}
\end{equation}
Now, the proof follows from the equations~\ref{1-two part}, \ref{2-two part} and using the first  part of Lemma~\ref{vertex-edge}, for the graphs $G,H$ and the vertices $u,v$.
\item[2)] Similar to the previous part, one can prove that $G\succeq H$.
\end{enumerate}
}
\end{proof}

{\rm Using Theorem~\ref{two part}, One can prove the following theorem.}

\begin{thm}\label{two part strict} Let $G$ and $H$ be two graphs. Let $u\in V(G)$, $v\in V(H)$ and $e\in E(G)$, $e\,'\in E(H)$. Then the following hold:
\begin{enumerate}
\item[1)] If $G\setminus u\succeq H\setminus v$ and $H\setminus [v]\succ G\setminus [u]$, or $G\setminus u\succ H\setminus v$ and
 $H\setminus [v]\succeq G\setminus [u]$, then $G\succ H$.
\item[2)] If $G\setminus e\succeq H\setminus e\,'$ and $H\setminus [e\,']\succ G\setminus [e]$, or $G\setminus e\succ H\setminus e\,'$ and $H\setminus [e\,']\succeq G\setminus [e]$, then $G\succ H$.
\end{enumerate}
\end{thm}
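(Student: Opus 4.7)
The plan is to reduce to Theorem~\ref{two part}, which already provides $G\succeq H$; what remains is to certify that the independence polynomials $I(G,x)$ and $I(H,x)$ are distinct. Writing
$$A(x) = I(G\setminus u,x) - I(H\setminus v,x), \qquad B(x) = I(G\setminus[u],x) - I(H\setminus[v],x),$$
the two instances of Lemma~\ref{vertex-edge}(1) combine to
$$I(G,x) - I(H,x) = A(x) + x\,B(x),$$
and the hypotheses translate to $A(x)\le 0$ on $[\xi(G\setminus u),0]$, $B(x)\ge 0$ on $[\xi(H\setminus[v]),0]$, with at least one of $A$ or $B$ not identically zero (depending on which of the two strict versions one assumes).

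The first step is to locate a common interval on which both sign conditions apply. Using Remark~\ref{root greater} on the two hypotheses and Theorem~\ref{real root} on the subgraph inclusion $H\setminus[v]\subseteq H\setminus v$ yields the chain
$$\xi(G\setminus u)\ \ge\ \xi(H\setminus v)\ \ge\ \xi(H\setminus[v])\ \ge\ \xi(G\setminus[u]),$$
so $[\xi(G\setminus u),0]\subseteq[\xi(H\setminus[v]),0]$; on the former interval $A\le0$ and $B\ge0$ hold simultaneously, hence $-xB\ge0$ as well since $x\le 0$.

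The second step is a contradiction argument. Assume $I(G,x)\equiv I(H,x)$, so that $A(x)\equiv -xB(x)$. On $[\xi(G\setminus u),0]$ the right-hand side is nonnegative while the left-hand side is nonpositive, forcing $A(x)=0$ throughout a nondegenerate interval. A polynomial vanishing on an interval vanishes identically, so $A\equiv 0$, and then $xB(x)\equiv 0$ forces $B\equiv 0$. This contradicts whichever of $A,B$ was assumed to be not identically zero, proving part~1. Part~2 is handled identically starting from Lemma~\ref{vertex-edge}(2), which produces
$$I(G,x) - I(H,x) = \bigl[I(G\setminus e,x)-I(H\setminus e\,',x)\bigr] - x^{2}\bigl[I(G\setminus[e],x)-I(H\setminus[e\,'],x)\bigr];$$
the factor $-x^{2}\le 0$ plays exactly the role of $x\le 0$ in the vertex case, so the same sign chase and polynomial-vanishing argument go through unchanged.

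The main point to watch is the sign chase: the existence of a common nondegenerate interval on which $A\le 0$ and $B\ge 0$ simultaneously is precisely what the chain of $\xi$ inequalities secures, and it is the only leverage available to exclude $I(G,x)\equiv I(H,x)$. Once that is in hand, everything else is routine polynomial bookkeeping.
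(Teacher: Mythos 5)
Your argument is correct and follows exactly the route the paper intends: the paper leaves this proof to the reader with the remark that it follows from Theorem~\ref{two part}, and you supply the missing details in the natural way, using Theorem~\ref{two part} for $G\succeq H$ and then the decomposition from Lemma~\ref{vertex-edge} together with the chain $\xi(G\setminus u)\geq\xi(H\setminus v)\geq\xi(H\setminus[v])\geq\xi(G\setminus[u])$ (via Remark~\ref{root greater} and Theorem~\ref{real root}) to force $A\equiv B\equiv 0$ if $I(G,x)\equiv I(H,x)$, contradicting the assumed strictness. The sign chase on the common nondegenerate interval (nondegenerate since the largest real root is negative) and the polynomial-vanishing step are exactly the leverage the paper's own proof of Theorem~\ref{two part} sets up, so no gap remains.
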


\section{The minimal and the maximal element of the family of trees with respect to the ordering $\succeq$}

{\rm In this section we investigate the minimal and the maximal element of the family of all trees with respect to the ordering $\succeq$. We show that for every tree $T$ of order $n$, $S_n\succeq T\succeq P_n$, where $S_n$ and $P_n$ are the star and the path of order $n$, respectively. First we introduce an operation on graphs. Let $G$ be a graph. We define an operation $\star$ on $G$ as follows: Let $u$ be a vertex of $G$ with degree $1$. Suppose $v$ is a vertex of $G$ with degree at least $3$ such that it has the shortest distance from $u$ among all vertices of degree at least $3$. Let $w$ be a vertex of $G$ adjacent to $v$. By $G^{ \star}_{u,v,w}$ we mean the graph $G\setminus vw+uw$. See Figure~\ref{star star}. Note that the order and the size of $G$ and $G^{ \star}_{u,v,w}$ are the same. Also, the number of vertices of degree $1$ in $G^{ \star}_{u,v,w}$ is one less from the number of vertices of degree $1$ in $G$. For example let $G_n$ be the graph with the vertex set $\{1,\ldots,n\}$ and the edge set $\{12,23,34,\ldots,(n-1)n\}\cup\{(n-2)n\}$ (see Figure~\ref{Gn}). Then $({G^{\star}_n})_{1,n-2,n}$ is the cycle $C_n$.}
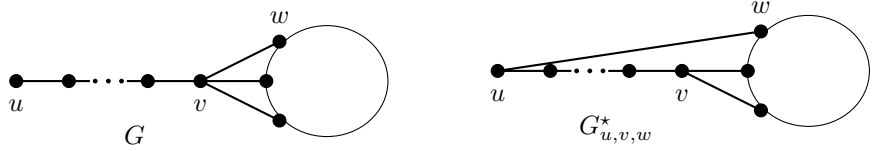
\begin{figure}[htb]
\centering
\begin{tikzpicture}[scale=0.7]
\filldraw [black]
(-2,0) circle (3.5 pt)
(-1,0) circle (3.5 pt)
(-0.5,0) circle (1 pt)
(-0.25,0) circle (1 pt)
(0,0) circle (1 pt)
(0.5,0) circle (3.5 pt)
(1.5,0) circle (3.5 pt)
(2.75,0) circle (3.5 pt)
(3,0.75) circle (3.5 pt)
(3,-0.75) circle (3.5 pt);
\node [label=below:$G$] (G_{u,v,w}) at (0.25,-0.5) {};
\node [label=below:$u$] (u) at (-2,0) {};
\node [label=below:$v$] (v) at (1.5,0) {};
\node [label=above:$w$] (w) at (3,0.75) {};
\draw[thick] (-2,0) -- (-1,0);
\draw[thick] (-1,0) -- (-0.60,0);
\draw[thick] (0.10,0) -- (0.5,0);
\draw[thick] (0.5,0) -- (1.5,0);
\draw[thick] (1.5,0) -- (3,0.75);
\draw[thick] (1.5,0) -- (3,-0.75);
\draw[thick] (1.5,0) -- (2.75,0);
\draw (3.90,0) ellipse (33pt and 30pt);
\end{tikzpicture}
\hspace{1cm}
\begin{tikzpicture}[scale=0.7]
\filldraw [black]
(-2,0) circle (3.5 pt)
(-1,0) circle (3.5 pt)
(-0.5,0) circle (1 pt)
(-0.25,0) circle (1 pt)
(0,0) circle (1 pt)
(0.5,0) circle (3.5 pt)
(1.5,0) circle (3.5 pt)
(2.75,0) circle (3.5 pt)
(3,0.75) circle (3.5 pt)
(3,-0.75) circle (3.5 pt);
\node [label=below:$G_{u,v,w}^{\star}$] (G_{u,v,w}^*) at (0.25,-0.5) {};
\node [label=below:$u$] (u) at (-2,0) {};
\node [label=below:$v$] (v) at (1.5,0) {};
\node [label=above:$w$] (w) at (3,0.75) {};
\draw[thick] (-2,0) -- (-1,0);
\draw[thick] (-1,0) -- (-0.60,0);
\draw[thick] (0.10,0) -- (0.5,0);
\draw[thick] (0.5,0) -- (1.5,0);
\draw[thick] (-2,0) -- (3,0.75);
\draw[thick] (1.5,0) -- (3,-0.75);
\draw[thick] (1.5,0) -- (2.75,0);
\draw (3.90,0) ellipse (33pt and 30pt);
\end{tikzpicture}
\caption{The operation $\star$.}\label{star star}
\end{figure}

\begin{thm}\label{star operation} Let $G$, $u,v,w$ and  $G^{ \star}_{u,v,w}$ be as mentioned above. Then $G\succeq G^{ \star}_{u,v,w}$.
\end{thm}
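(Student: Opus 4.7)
The plan is to apply Lemma~\ref{vertex-edge}, part~1, at the vertex $w$ in both $G$ and $G':=G^{\star}_{u,v,w}$. The crucial observation is that the two edges where $G$ and $G'$ differ---namely $vw\in E(G)$ and $uw\in E(G')$---are both incident to $w$, so $G\setminus w=G'\setminus w$, and those terms cancel in the difference, giving
$$I(G,x)-I(G',x)=x\bigl[I(G\setminus[w]_G,x)-I(G'\setminus[w]_{G'},x)\bigr].$$

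Let the path from $u$ to $v$ in $G$ be $u=p_0,p_1,\ldots,p_{t-1},v$; by the choice of $v$, each intermediate $p_i$ has degree~$2$. Let $R$ be the subgraph of $G$ induced on $V(G)\setminus\{u,p_1,\ldots,p_{t-1},v\}$; I treat the generic case $w\in V(R)$, the case $w=p_{t-1}$ being handled by an entirely analogous $w$-decomposition. Put $R_{0}:=R\setminus[w]_R$ and $W'':=(N_G(v)\cap V(R))\setminus(\{w\}\cup N_R(w))$. A direct structural check shows that $G\setminus[w]_G$ is the disjoint union $P_t+R_{0}$ (the path $u,p_1,\ldots,p_{t-1}$ becomes detached once $v$ is removed), while $G'\setminus[w]_{G'}$ consists of a copy of $P_t$ (now $p_1,p_2,\ldots,p_{t-1},v$) attached to $R_{0}$ through $v$'s remaining edges to $W''$. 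Applying Lemma~\ref{vertex-edge} at $v$ to the latter graph yields
$$I(G'\setminus[w]_{G'},x)=I(P_{t-1},x)\,I(R_{0},x)+x\,I(P_{t-2},x)\,I(R_{0}\setminus W'',x).$$

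Combining this with the path identity $I(P_t,x)-I(P_{t-1},x)=x\,I(P_{t-2},x)$ (with the convention $I(P_{-1},x)=1$ to cover $t=1$), the two displayed expressions telescope to
$$I(G,x)-I(G',x)=x^{2}\,I(P_{t-2},x)\,\bigl[I(R_{0},x)-I(R_{0}\setminus W'',x)\bigr].$$
On $[\xi(G),0]$ the three factors have the correct signs: $x^{2}\geq 0$; $I(P_{t-2},x)\geq 0$ because $P_{t-2}$ is a subgraph of $G$, so $\xi(P_{t-2})\leq\xi(G)$ by Theorem~\ref{real root}; and $I(R_{0},x)-I(R_{0}\setminus W'',x)\leq 0$ by Theorem~\ref{subgraph}, since $R_{0}\setminus W''$ is a subgraph of $R_{0}$. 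Thus $I(G,x)\leq I(G',x)$ on $[\xi(G),0]$, which is exactly $G\succeq G'$. The main obstacle I anticipate is the structural bookkeeping: recognizing that the ``rest'' piece $R_{0}$ is common to both $G\setminus[w]_G$ and $G'\setminus[w]_{G'}$ is precisely what allows the path recurrence to telescope everything into a single nonpositive product, and one must also double-check that the convention $I(P_{-1},x)=1$ correctly handles the boundary case $t=1$.
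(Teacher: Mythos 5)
Your argument is correct on the configuration the paper actually intends (Figure~\ref{star star}, with $w$ a neighbour of $v$ off the $u$--$v$ path), but it takes a genuinely different route. The paper's proof is two lines: since the differing edges give $G\setminus vw=G^{\star}_{u,v,w}\setminus uw$, and $G\setminus[vw]$ is an induced subgraph of $G^{\star}_{u,v,w}\setminus[uw]$, Theorem~\ref{subgraph} together with part~2) of Theorem~\ref{two part} finishes immediately --- no path recurrence, no case distinction on $t$, no explicit identification of the pieces $P_t$, $R_0$, $W''$. You instead delete the common vertex $w$ (part~1) of Lemma~\ref{vertex-edge}), use $G\setminus w=G^{\star}_{u,v,w}\setminus w$, and telescope with $I(P_t,x)-I(P_{t-1},x)=xI(P_{t-2},x)$ to reach the closed form $x^{2}I(P_{t-2},x)\bigl[I(R_0,x)-I(R_0\setminus W'',x)\bigr]$; this buys an explicit formula for the difference (in particular it shows exactly when the inequality is strict), at the cost of the structural bookkeeping. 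Your sign analysis is essentially fine, but to apply Theorem~\ref{subgraph} to the last factor on $[\xi(G),0]$ you should add one line: $R_0$ is a subgraph of $G$, so $\xi(G)\geq\xi(R_0)$ by Theorem~\ref{real root}, hence the inequality $I(R_0\setminus W'',x)\geq I(R_0,x)$, which Theorem~\ref{subgraph} gives on $[\xi(R_0),0]$, holds on the smaller interval (the case $W''=\emptyset$ being trivial). You made exactly this move for $I(P_{t-2},x)$, so it is only an omission.

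One claim is not right as stated: the case $w=p_{t-1}$ is not ``entirely analogous''. Redoing your computation there yields $I(G,x)-I(G^{\star}_{u,v,w},x)=x^{2}\bigl[I(P_{t-4},x)I(R,x)-I(P_{t-3},x)I(R\setminus W,x)\bigr]$ with $W=N_G(v)\cap V(R)$, and the bracket is a difference of products of incomparable graphs (a longer path against a larger second factor), so no subgraph argument settles its sign; the cancellation that drives your main case disappears. Fortunately this case lies outside the intended operation ($w$ is drawn off the path in Figure~\ref{star star}; for $t\leq 2$ taking $w=p_{t-1}$ would even create a loop or multiple edge, and in the tree applications it would destroy acyclicity), and the paper's own proof also tacitly excludes it, since the induced-subgraph claim for $G\setminus[vw]$ inside $G^{\star}_{u,v,w}\setminus[uw]$ fails when $w=p_{t-1}$. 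So either restrict to $w$ off the path, as the paper does, or give that case a genuinely separate argument rather than calling it analogous.
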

\begin{proof}{ Since $G\setminus vw=G^{ \star}_{u,v,w}\setminus uw$,  $G\setminus vw\succeq G^{ \star}_{u,v,w}\setminus uw$. On the other hand  $G\setminus [vw]$  is an induced subgraph of $G^{ \star}_{u,v,w}\setminus [uw]$. Thus by Theorem~\ref{subgraph}, $G^{ \star}_{u,v,w}\setminus [uw]\succeq G\setminus [vw]$. The second part of Theorem~\ref{two part}, completes the proof.}
\end{proof}

{\rm For $i=1,\ldots,k$, let $n_i\geq 2$ be a natural number. By $T=T(n_1,\ldots,n_k)$ we mean the tree $T$ which has a
vertex $v$ of degree $k$ such that $T\setminus
v=P_{n_1-1}+\cdots+P_{n_k-1}$ (see Figure~\ref{Tnk}). Clearly, the order of $T(n_1,\ldots,n_k)$ is $n_1+\cdots+n_k-k+1$.
Note that $T(n_1,\ldots,n_k)$ is a path or a {\it starlike} tree (a tree which has exactly one vertex of degree greater than two). Let $T_{n,k}=T(n-k+1,\underbrace{2,\ldots,2}_{k-1})$ (see Figure~\ref{Tnk}). In particular $T_{n,1}=T_{n,2}=P_n$. By $H_{n,k}$ we mean the tree that is shown in Figure~\ref{Hnk}. We have $H_{n,k}=H_{n,n-k}$ and $H_{n,1}=S_n$.}
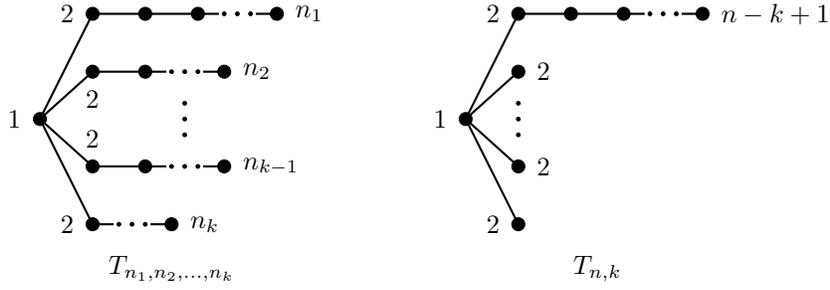
\begin{figure}[htb]
\centering
\begin{tikzpicture}[scale=0.7]
\filldraw [black]
(0,0) circle (3.5 pt)
(1,2) circle (3.5 pt)
(1,0.9) circle (3.5 pt)
(1,-0.9) circle (3.5 pt)
(1,-2) circle (3.5 pt)
(2,2) circle (3.5 pt)
(2,0.9) circle (3.5 pt)
(2,-0.9) circle (3.5 pt)
(3,2) circle (3.5 pt)
(3.5,2) circle (1 pt)
(3.75,2) circle (1 pt)
(4,2) circle (1 pt)
(4.5,2) circle (3.5 pt)
(2.5,0.9) circle (1 pt)
(2.75,0.9) circle (1 pt)
(3,0.9) circle (1 pt)
(3.5,0.9) circle (3.5 pt)
(2.5,-0.9) circle (1 pt)
(2.75,-0.9) circle (1 pt)
(3,-0.9) circle (1 pt)
(3.5,-0.9) circle (3.5 pt)
(1.5,-2) circle (1 pt)
(1.75,-2) circle (1 pt)
(2,-2) circle (1 pt)
(2.5,-2) circle (3.5 pt)
(2.75,0.30) circle (1 pt)
(2.75,0) circle (1 pt)
(2.75,-0.30) circle (1 pt);
\node [label=below:$T_{n_1,n_2,...,n_k}$] (T_{n_1,n_2,...,n_k}) at (2.5,-2.25) {};
\node [label=left:$1$] (1) at (0,0) {};
\node [label=left:$2$] (2) at (1,2) {};
\node [label=below:$2$] (2) at (1,0.9) {};
\node [label=above:$2$] (2) at (1,-0.9) {};
\node [label=left:$2$] (2) at (1,-2) {};
\node [label=right:$n_1$] (n_1) at (4.5,2) {};
\node [label=right:$n_2$] (n_2) at (3.5,0.9) {};
\node [label=right:$n_{k-1}$] (n_{k-1}) at (3.5,-0.9) {};
\node [label=right:$n_{k}$] (n_{k}) at (2.5,-2) {};
\draw[thick] (0,0) -- (1,2);
\draw[thick] (0,0) -- (1,0.9);
\draw[thick] (0,0) -- (1,-0.9);
\draw[thick] (0,0) -- (1,-2);
\draw[thick] (1,2) -- (2,2);
\draw[thick] (1,0.9) -- (2,0.9);
\draw[thick] (1,-0.9) -- (2,-0.9);
\draw[thick] (2,2) -- (3,2);
\draw[thick] (3,2) -- (3.40,2);
\draw[thick] (4.10,2) -- (4.5,2);
\draw[thick] (2,0.9) -- (2.40,0.9);
\draw[thick] (3.10,0.9) -- (3.5,0.9);
\draw[thick] (2,-0.9) -- (2.40,-0.9);
\draw[thick] (3.10,-0.9) -- (3.5,-0.9);
\draw[thick] (1,-2) -- (1.40,-2);
\draw[thick] (2.10,-2) -- (2.5,-2);
\end{tikzpicture}
\hspace{1cm}
\begin{tikzpicture}[scale=0.7]
\filldraw [black]
(0,0) circle (3.5 pt)
(1,2) circle (3.5 pt)
(1,0.9) circle (3.5 pt)
(1,-0.9) circle (3.5 pt)
(1,-2) circle (3.5 pt)
(2,2) circle (3.5 pt)
(3,2) circle (3.5 pt)
(3.5,2) circle (1 pt)
(3.75,2) circle (1 pt)
(4,2) circle (1 pt)
(4.5,2) circle (3.5 pt)
(1,0.30) circle (1 pt)
(1,0) circle (1 pt)
(1,-0.30) circle (1 pt);
\node [label=below:$T_{n,k}$] (T_{n,k}) at (2.5,-2.25) {};
\node [label=left:$1$] (1) at (0,0) {};
\node [label=left:$2$] (2) at (1,2) {};
\node [label=right:$2$] (2) at (1,0.9) {};
\node [label=right:$2$] (2) at (1,-0.9) {};
\node [label=left:$2$] (2) at (1,-2) {};
\node [label=right:$n-k+1$] (n-k+1) at (4.5,2) {};
\draw[thick] (0,0) -- (1,2);
\draw[thick] (0,0) -- (1,0.9);
\draw[thick] (0,0) -- (1,-0.9);
\draw[thick] (0,0) -- (1,-2);
\draw[thick] (1,2) -- (2,2);
\draw[thick] (2,2) -- (3,2);
\draw[thick] (3,2) -- (3.40,2);
\draw[thick] (4.10,2) -- (4.5,2);
\end{tikzpicture}
\caption{The trees $T(n_1,\ldots,n_k)$ and $T_{n,k}$.}\label{Tnk}
\end{figure}
\vspace{.1cm}
\begin{figure}[htb]
\centering
\begin{tikzpicture}[scale=.8]
\filldraw [black]
(-1,0) circle (3.5 pt)
(1,0) circle (3.5 pt)
(-2,1) circle (3.5 pt)
(-2,0.5) circle (3.5 pt)
(-2,-1) circle (3.5 pt)
(-2,0.1) circle (1 pt)
(-2,-0.3) circle (1 pt)
(-2,-0.7) circle (1 pt)
(2,0.1) circle (1 pt)
(2,-0.3) circle (1 pt)
(2,-0.7) circle (1 pt)
(2,1) circle (3.5 pt)
(2,0.5) circle (3.5 pt)
(2,-1) circle (3.5 pt);
\node [label=below:$H_{n,k}$] (H_{n,k}) at (0,-0.75) {};
\node [label=left:$1$] (1) at (-2,1) {};
\node [label=left:$2$] (2) at (-2,0.5) {};
\node [label=left:$k-1$] (k-1) at (-2,-1) {};
\node [label=right:$1$] (1) at (2,1) {};
\node [label=right:$2$] (2) at (2,0.5) {};
\node [label=right:$n-k-1$] (n-k-1) at (2,-1) {};
\draw[thick] (-1,0) -- (1,0);
\draw[thick] (-1,0) -- (-2,1);
\draw[thick] (-1,0) -- (-2,0.5);
\draw[thick] (-1,0) -- (-2,-1);
\draw[thick] (1,0) -- (2,1);
\draw[thick] (1,0) -- (2,0.5);
\draw[thick] (1,0) -- (2,-1);
\end{tikzpicture}
\caption{The tree $H_{n,k}$.}\label{Hnk}
\end{figure}
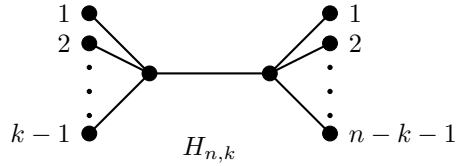
\begin{figure}[htb]
\centering
\begin{tikzpicture}[scale=0.7]
\filldraw [black]
(0,0) circle (3.5 pt)
(0,-2) circle (3.5 pt)
(2,-2) circle (3.5 pt)
(-2,-2) circle (3.5 pt)
(.075,-2) circle (1 pt)
(1,-2) circle (1 pt)
(1.25,-2) circle (1 pt);
\node [label=below:$T$] (T) at (0,-5.5) {};
\node [label=left:$v$] (v) at (0.25,0.25) {};
\node [label=left:$v_k$] (v_k) at (2.75,-1.75) {};
\node [label=above:$v_2$] (v_2) at (-.25,-2.1) {};
\node [label=left:$v_1$] (v_1) at (-2,-2) {};
\draw[thick] (0,0) -- (-2,-2);
\draw[thick] (0,0) -- (0,-2);
\draw[thick] (0,0) -- (2,-2);
\draw (-1.95,-3.35) ellipse (20pt and 40pt);
\node [label=below:$H_1$] (H_1) at (-2,-2.75) {};
\draw (0,-3.35) ellipse (20pt and 40pt);
\node [label=below:$H_2$] (H_2) at (0,-2.75) {};
\draw (2.05,-3.35) ellipse (20pt and 40pt);
\node [label=below:$H_k$] (H_k) at (2,-2.75) {};
\end{tikzpicture}
\caption{the tree $T$.}\label{T}
\end{figure}
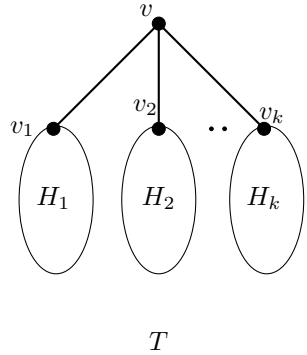
{\rm Now, we are in a position to determine  the minimal and the maximal element of trees with respect to the ordering $\succeq$.}
\begin{thm}\label{TH} Let $T$ be a tree of order $n$ with maximum degree $k$. Then $$H_{n,k}\succeq T\succeq T_{n,k}.$$
Moreover, in the left hand side the equality holds if and only if $T=H_{n,k}$, and in the right hand side the equality holds if and only if $T=T_{n,k}$.
\end{thm}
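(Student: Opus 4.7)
Plan: I would prove the two inequalities separately by induction on $n$, using the $\star$-operation (Theorem \ref{star operation}) together with Theorem \ref{two part strict} as the primary local tools.

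\textbf{Proof of $T\succeq T_{n,k}$.} The plan is a two-stage reduction. In the first stage I reduce to the starlike case. Fix a vertex $v_0\in V(T)$ of degree $k$. If $T$ has any vertex of degree $\ge 3$ other than $v_0$, then some leaf $u$ of $T$ has its closest vertex of degree $\ge 3$ distinct from $v_0$; applying the $\star$-operation at this $u$ yields a tree $T^{\star}$ with $T\succ T^{\star}$ (via Theorem \ref{star operation} together with non-isomorphism) and with maximum degree still $k$, since the operation touches only a branching vertex $\ne v_0$. After finitely many such reductions I arrive at a starlike tree $T(n_1,\ldots,n_k)$ with $T\succeq T(n_1,\ldots,n_k)$. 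In the second stage I show an \emph{unbalancing lemma}: if $n_1\ge\cdots\ge n_k\ge 2$ and $n_i\ge 3$ for some $i\ne 1$, then transferring one vertex from branch $i$ to branch $1$ yields a strictly smaller starlike tree, i.e.\ $T(n_1,\ldots,n_k)\succ T(n_1+1,n_2,\ldots,n_i-1,\ldots,n_k)$. This is proved via Theorem \ref{two part strict}, cutting at the first vertex of branch $i$ in the former tree and the last vertex of branch $1$ in the latter; the resulting smaller pieces are unions of paths and smaller starlike trees that compare by the outer induction on $n$. Iterating the unbalancing drives the sequence to $(n-k+1,2,\ldots,2)$, i.e.\ to $T_{n,k}$.

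\textbf{Proof of $H_{n,k}\succeq T$.} Here I would use a dual ``concentration'' reduction. The key local move is: if $T$ is not already a double star, some vertex $w$ of degree $\ge 2$ lies outside the pair of prospective central vertices; detach a pendant-subtree hanging off $w$'s neighborhood and reattach it to a larger-degree vertex closer to the center. This produces a tree $T^{\prime}$ with $T^{\prime}\succ T$ by Theorem \ref{two part strict} applied at a cut near $w$. After finitely many such moves $T$ is transformed into a double star of order $n$. Among double stars of order $n$, a short direct calculation (or a single further application of Theorem \ref{two part strict}) shows that $H_{n,k}$ is the unique maximum in $\succ$ over trees of maximum degree $k$; the symmetry $H_{n,k}=H_{n,n-k}$ handles the case $k<n/2$, where the dominating double star has maximum degree $n-k$ rather than $k$.

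\textbf{Main obstacle.} The real work lies in the two local-move lemmas: the unbalancing lemma for starlike trees and the concentration lemma for general trees. Both reduce to applying Theorem \ref{two part strict} at a judiciously chosen cut vertex, but then one must verify its hypotheses on the smaller pieces—these are independence-polynomial inequalities on unions of paths and smaller starlike trees. The induction on $n$ must be set up so that each required sub-comparison has already been established, and the strictness must be tracked through every step so that the ``equality iff'' clauses follow automatically: any non-extremal $T$ admits a strict local move, so no non-extremal tree can achieve $\mathcal I$-equivalence with $H_{n,k}$ or $T_{n,k}$.
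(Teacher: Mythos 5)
Your plan for the second inequality, $H_{n,k}\succeq T$, has a genuine gap: the ``concentration'' scheme does not control where the increasing chain ends. A move that detaches a subtree and reattaches it to a vertex of larger degree immediately pushes the maximum degree above $k$, and iterating such moves terminates at a double star $H_{n,j}$ with $j$ large (in the extreme at $S_n$), which by Theorem~\ref{THchain} dominates $H_{n,k}$; from $D\succeq T$ and $D\succeq H_{n,k}$ nothing follows about $H_{n,k}$ versus $T$. Your closing step is also not meaningful as stated: among double stars of order $n$ the $\succ$-maximum is the most unbalanced one (the star), and for $k<n/2$ there is no double star of maximum degree $k$ at all, so there is no intra--double-star comparison that recovers the bound $H_{n,k}$. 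What is missing is a mechanism that uses the hypothesis $\Delta(T)=k$ at every step. The paper's proof supplies exactly this: induct on $n$, delete a pendant edge $zw$ of $T$ whose endpoints avoid a fixed degree-$k$ vertex, and the pendant edge $z'w'$ at the large center of $H_{n,k}$; then $T\setminus zw=K_1+T_0$ with $T_0$ of order $n-1$ and maximum degree still $k$, while $H_{n,k}\setminus z'w'=K_1+H_{n-1,k}$, so the induction hypothesis and Theorem~\ref{plus minus} give one hypothesis of Theorem~\ref{two part strict}; the other hypothesis is the observation that $H_{n,k}\setminus[z'w']=(k-1)K_1$ is a \emph{proper subgraph} of $T\setminus[zw]$ unless $T=H_{n,k}$ (Theorem~\ref{subgraph}). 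That last containment is precisely where the maximum-degree constraint enters, and your proposal has nothing playing its role.

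For the first inequality your stage one (reduction to a starlike tree via the $\star$-operation) is the same as the paper's. Stage two, however, is both different from the paper and under-specified: your one-vertex-transfer ``unbalancing lemma'' is true (it is exactly the elementary step behind Theorem~\ref{main theorem}), but the cut you name goes the wrong way. Taking $u$ to be the vertex of branch $i$ adjacent to the center of $G=T(n_1,\ldots,n_k)$ and $v$ the end leaf of the long branch of $H=T(n_1+1,\ldots,n_i-1,\ldots,n_k)$, one has $G\setminus u=T(n_1,\ldots,n_{i-1},n_{i+1},\ldots,n_k)+P_{n_i-2}$, which is a proper subgraph of $H\setminus v=T(n_1,\ldots,n_i-1,\ldots,n_k)$, so the hypothesis $G\setminus u\succeq H\setminus v$ of Theorem~\ref{two part strict} is reversed. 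The cut that works is at the two branch tips (the leaf of branch $i$ in $G$ and the leaf of branch $1$ in $H$): then the vertex-deleted trees coincide and the closed-neighborhood comparison becomes the same unbalancing statement at smaller order, with the boundary cases $n_i\in\{3,4\}$ needing separate treatment — in effect you would be re-proving the case analysis of Theorem~\ref{main theorem}. The paper avoids all of this by a coarser move: merging two long branches, $T(n_1,n_2,n_3,\ldots,n_k)\succ T(n_1+n_2-2,2,n_3,\ldots,n_k)$, where after one edge cut the closed-neighborhood comparison is a plain proper-subgraph containment and no induction is needed. So stage two of your first part is salvageable but harder than necessary, while the second part needs to be replaced by an argument of the leaf-peeling type above.
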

\begin{proof}{First we prove that $T\succeq T_{n,k}$.  Let $v$ be a vertex of $T$ with degree $k$. Let $N(v)=\{v_1,\ldots,v_k\}$ be the set of all neighbors of $v$. Suppose that $T\setminus v=H_1+\cdots+H_k$, such that $v_i\in V(H_i)$ (see Figure~\ref{T}). Let $T_i=H_i+vv_i$ and $n_i$ be the order of $T_i$, for $i=1,\ldots,k$.

 We claim that $T\succeq T(n_1,\ldots,n_k)$. If $T_i=P_{n_i}$, for $i=1,\ldots,k$, then $T=T(n_1,\ldots,n_k)$. So in this case the claim is proved. Now, suppose that $T\neq T(n_1,\ldots,n_k)$. Without loss of generality we may assume that $T_k\neq P_{n_k}$. Let $u\neq v$ be a vertex of $T_k$ with degree $1$. Since $T_k$ is not a path, there exists a vertex $w\in V(T_k)$ of degree at least $3$ having the shortest distance from $u$ among all vertices of degree at least $3$. Let $z\neq v$ be a vertex of $T_k$ adjacent to $w$. Considering the trees $({T_k})^{\star}_{u,w,z}$ and $T^{\star}_{u,w,z}$, we conclude that $T^{\star}_{u,w,z}\setminus v=H_1+\cdots+H_{k-1}+({T_k})^{\star}_{u,w,z}\setminus v$. Since the operation $\star$ reduces the number of vertices of degree $1$, using the operation $\star$ ( applying this operation $(t-k)$ times, where $t$ is the number of vertices of $T$ with degree $1$), one obtains the tree $T(n_1,\ldots,n_k)$. Using Theorem~\ref{star operation}, we get $T\succeq T(n_1,\ldots,n_k)$. Thus the claim is proved.

To complete the proof of the first part, it is sufficient to show that $T(n_1,\ldots,n_k)\succeq T_{n,k}$. Let $T_1=T(n_1,\ldots,n_k)$.
Assume that $T_1\neq T_{n,k}$. We prove that $T_1\succ T_{n,k}$. Since $T_1\neq T_{n,k}$, there exist $i$ and $j$ such that $n_i,n_j\geq 3$. Without loss of generality suppose that $n_1,n_2\geq 3$. Let $v\in V(T_1)$ be a vertex of degree $k$. Thus $T_1\setminus
v=P_{n_1-1}+\cdots+P_{n_k-1}$. For $i=1,\ldots,k$, let $V(P_{n_i-1})=\{v^i_1,\ldots,v^i_{n_i-1}\}$ and $E(P_{n_i-1})=\{v^i_1v^i_2,v^i_2v^i_3,\ldots,v^i_{n_i-2}v^i_{n_i-1}\}$. Note that $V(T_1)=\bigcup_{i=1}^{k}V(P_{n_i-1})\bigcup\{v\}$ and $E(T_1)=\bigcup_{i=1}^{k}E(P_{n_i-1})\bigcup\{vv^i_1\}$. Let $T_2=T_1\setminus v^1_1v^1_2+v^1_{n_1-1}v^2_{n_2-1} $. Thus $T_2=T(n_1+n_2-2,2,n_3,\ldots,n_k)$.  Using this method several times one can obtain the tree $T_{n,k}$. Therefore, if $T(n_1,\ldots,n_k)\succ T(n_1+n_2-2,2,n_3,\ldots,n_k)$, equivalently $T_1\succ T_2$, the proof is complete. Now, we show that $T_1\succ T_2$. First note that $T_1\setminus v_1^1v_2^1=T_2\setminus v_{n_1-1}^1v_{n_2-1}^2$. We have the following cases. All of these cases are proved similarly. For example we prove the first case.
\begin{enumerate}
\item[1)] $n_1=n_2=3$. Then $T_1\setminus[v_1^1v_2^1]=P_2+P_{n_3-1}+\cdots+P_{n_k-1}$
and $T_2\setminus[v_{n_1-1}^1v_{n_2-1}^2]=T(2,n_3,\ldots,n_k)$. Thus $T_1\setminus[v_1^1v_2^1]$ is a proper subgraph of $T_2\setminus[v_{n_1-1}^1v_{n_2-1}^2]$. Using Theorem~\ref{subgraph}, we have $T_2\setminus[v_{n_1-1}^1v_{n_2-1}^2]\succ T_1\setminus[v_1^1v_2^1]$. On the other hand $T_1\setminus v_1^1v_2^1\succeq T_2\setminus v_{n_1-1}^1v_{n_2-1}^2$. By the second part of Theorem~\ref{two part strict}, we obtain $T_1\succ T_2$.
\item[2)] $n_1=3$ and $n_2\geq 4$.
\item[3)] $n_1\geq 4$ and $n_2=3$.
\item[4)]$n_1,n_2\geq 4$.
\end{enumerate}

Now, we prove the left hand side inequality. Let $T\neq H_{n,k}$. By induction on $n$ we show that $H_{n,k}\succ T$. It is easy to see that $n\geq 6$. If $n=6$, then $k=3$ and $T$ is $T(2,2,4)$ or $T(2,3,3)$. Since $I(H_{6,3},x)=1+6x+10x^2+6x^3+x^4$, $I(T(2,2,4),x)=1+6x+10x^2+3x^3+x^4$ and
$I(T(2,3,3),x)=1+6x+10x^2+5x^3$, one can see that $H_{6,3}\succ T(2,2,4)$ and $H_{6,3}\succ T(2,3,3)$. Suppose that $n\geq 7$. Let $u$ be a vertex of $T$ with degree $k$ and $N(u)=\{u_1,\ldots,u_k\}$ be the set of all neighbors of $u$. Suppose that $T\setminus u=T_1+\cdots+T_k$, such that $u_i\in V(T_i)$. Let $n_i$ be the order of $T_i$, for $i=1,\ldots,k$. If $n_1=\cdots=n_k=1$, then $k=n-1$ and $T=H_{n,n-1}$, a contradiction. So there exists $j$ such that $n_j\geq 2$. Let $w\neq u_j$ be a vertex of $T_j$ with degree $1$. Let $z$ be the neighbor of $w$. Let $z'$ be the vertex of $H_{n,k}$ that has degree $n-k$ and $w'$ be a neighbor of $z'$ with degree $1$. We have $T\setminus zw=K_1+T_0$, where $T_0$ is a tree of order $n-1$ and the maximum degree $k$. On the other hand $H_{n,k}\setminus z'w'=K_1+H_{n-1,k}$. By the induction hypothesis, $H_{n-1,k}\succeq T_0$. Using the first part of Theorem~\ref{plus minus}, we conclude that $H_{n,k}\setminus z'w'\succeq T\setminus zw$. We have $H_{n,k}\setminus[z'w']=(k-1)K_1$. On the other hand, the order of $T\setminus[zw]$ is at least $k-1$. If the order of $T\setminus[zw]$ is  $k-1$, then $T=H_{n,k}$, a contradiction.
Thus $H_{n,k}\setminus[z'w']$ is a proper subgraph of $T\setminus[zw]$. By Theorem~\ref{subgraph}, $T\setminus[zw]\succ H_{n,k}\setminus[z'w']$. Using the second part of Theorem~\ref{two part strict}, we conclude that $H_{n,k}\succ T$.}
\end{proof}
{\rm Similar to Theorem~\ref{TH}, by deleting some suitable edges, one can prove the following theorem. The next result states the relation between the trees $T_{n,k}$ and $H_{n,k}$, for $k=2,3,\ldots$.}

\begin{thm}\label{THchain} Let $n\geq 2$. Then the following hold:
\begin{enumerate}
\item[1)] $T_{n,n-1}\succ T_{n,n-2}\succ \cdots\succ T_{n,2}$.
\item[2)] $H_{n,n-1}\succ H_{n,n-2}\succ \cdots \succ H_{n,\lceil\frac{n}{2}\rceil}$.
\end{enumerate}
\end{thm}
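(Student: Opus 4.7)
The plan is to apply the edge version of Theorem~\ref{two part strict} in both parts: in each consecutive comparison I pick one pendant edge in each of the two trees so that the single-edge-deleted graphs coincide, while the closed-neighborhood deletions admit a transparent $\succ$-comparison. For part (1) the choice is guided by the $\star$-operation of Theorem~\ref{star operation}; for part (2) the $\star$-operation is not applicable and a symmetric pendant edge at each high-degree vertex is used instead.

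For part (1), label $T_{n,k+1}$ by its central vertex $c$, its pendant leaves $\ell_1,\ldots,\ell_k$, and its long path $c-p_1-\cdots-p_{n-k-1}$. Applying $\star$ with $u=\ell_1$, $v=c$, $w=p_1$ turns $T_{n,k+1}$ into $T_{n,k}$, in which $\ell_1$ becomes an internal vertex on the longer path $c-\ell_1-p_1-\cdots-p_{n-k-1}$. I would set $e=cp_1$ in $T_{n,k+1}$ and $e'=\ell_1 p_1$ in $T_{n,k}$. A direct inspection gives $T_{n,k+1}\setminus e=T_{n,k}\setminus e'=K_{1,k}+P_{n-k-1}$, so certainly $T_{n,k+1}\setminus e\succeq T_{n,k}\setminus e'$. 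The closed-neighborhood deletions are $T_{n,k+1}\setminus[e]=P_{n-k-3}$ and $T_{n,k}\setminus[e']=(k-1)K_1+P_{n-k-3}$, with the convention that $P_{-1}$ and $P_{0}$ are the empty graph in the boundary cases $k\in\{n-3,n-2\}$. The first is a proper subgraph of the second since $k-1\geq 1$, so Theorem~\ref{subgraph} yields $T_{n,k}\setminus[e']\succ T_{n,k+1}\setminus[e]$, and part (2) of Theorem~\ref{two part strict} concludes $T_{n,k+1}\succ T_{n,k}$. Iterating over $k=n-2,n-3,\ldots,2$ gives the full chain.

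For part (2), label $H_{n,k+1}$ by its two adjacent non-leaf vertices $a,b$, with pendant leaves $\ell_1,\ldots,\ell_k$ at $a$ and $y_1,\ldots,y_{n-k-2}$ at $b$; label $H_{n,k}$ similarly with $k-1$ leaves at $a$ and $n-k-1$ leaves at $b$. The $\star$-operation does not send $H_{n,k+1}$ to $H_{n,k}$, so take $e=a\ell_k$ in $H_{n,k+1}$ and $e'=by_{n-k-1}$ in $H_{n,k}$. Both single-edge-deletions produce an isolated leaf attached to an $H_{n-1,k}$, so $H_{n,k+1}\setminus e=H_{n,k}\setminus e'=K_1+H_{n-1,k}$. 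The closed-neighborhood deletions reduce to $H_{n,k+1}\setminus[e]=(n-k-2)K_1$ and $H_{n,k}\setminus[e']=(k-1)K_1$, with independence polynomials $(1+x)^{n-k-2}$ and $(1+x)^{k-1}$ and common largest real root $-1$. Since $k\geq\lceil n/2\rceil$ forces $k-1>n-k-2$, we have $(1+x)^{n-k-2}>(1+x)^{k-1}$ on $(-1,0)$, that is $(k-1)K_1\succ(n-k-2)K_1$. Theorem~\ref{two part strict}(2) then yields $H_{n,k+1}\succ H_{n,k}$.

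The principal technical step is the closed-neighborhood comparison in part (2): the inequality $k-1>n-k-2$ is exactly what forces the chain to stop at $\lceil n/2\rceil$, because below that value the two exponents flip and the symmetry $H_{n,k}=H_{n,n-k}$ merely retraces the same chain in reverse. In part (1) the comparison is essentially for free via Theorem~\ref{subgraph}; the only nuisance there is tracking the boundary cases $k\in\{n-3,n-2\}$ where the residual path $P_{n-k-3}$ collapses, but the proper-subgraph inclusion continues to hold and the argument goes through uniformly.
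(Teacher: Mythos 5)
Your proof is correct: your edge choices make the single-edge deletions coincide ($K_{1,k}+P_{n-k-1}$ in part (1), $K_1+H_{n-1,k}$ in part (2)), reduce the closed-neighborhood comparisons to $(k-1)K_1+P_{n-k-3}$ versus $P_{n-k-3}$ and $(k-1)K_1$ versus $(n-k-2)K_1$ (where $k>\frac{n-1}{2}$ gives the strict inequality), and Theorem~\ref{two part strict} then yields each strict step, with the boundary cases handled by the standard convention $I(\emptyset,x)=1$. This is essentially the paper's own (only sketched) approach of ``deleting some suitable edges'' in the manner of Theorem~\ref{TH}, so nothing further is needed.
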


\begin{cor}\label{star path} Let $T$ be a tree of order $n$. Then $S_n\succeq T\succeq P_n$. Moreover, in the left hand side the equality holds if and only if $T=S_n$ while in the right hand side the equality holds if and only if $T=P_n$.
\end{cor}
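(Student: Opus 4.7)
The plan is to sandwich $T$ using Theorem~\ref{TH} and then push both ends of the sandwich out to $S_n$ and $P_n$ via the chains in Theorem~\ref{THchain}. Let $k=\Delta(T)$. For $n\le 2$ we have $T=S_n=P_n$ and the statement is trivial, so assume $n\ge 3$, whence $k\ge 2$ (every tree on at least three vertices has a vertex of degree at least two).

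For the lower inequality $T\succeq P_n$, Theorem~\ref{TH} yields $T\succeq T_{n,k}$, and Theorem~\ref{THchain}(1) yields $T_{n,k}\succeq T_{n,2}=P_n$ since the chain there covers all indices in $[2,n-1]$. Transitivity of $\succeq$ is legitimate because Remark~\ref{root greater} nests the intervals as $[\xi(T),0]\subseteq[\xi(T_{n,k}),0]\subseteq[\xi(P_n),0]$, so pointwise inequalities valid on the larger intervals are a fortiori valid on $[\xi(T),0]$. This gives $T\succeq P_n$.

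For the upper inequality $S_n\succeq T$, Theorem~\ref{TH} yields $H_{n,k}\succeq T$. The chain in Theorem~\ref{THchain}(2) covers only indices in $[\lceil n/2\rceil,n-1]$, but the identity $H_{n,k}=H_{n,n-k}$ recorded after Figure~\ref{Hnk} lets us set $j=\max(k,n-k)\in[\lceil n/2\rceil,n-1]$ and identify $H_{n,k}$ with $H_{n,j}$. Then Theorem~\ref{THchain}(2) gives $S_n=H_{n,n-1}\succeq H_{n,j}=H_{n,k}$, and chaining yields $S_n\succeq T$.

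For the equality characterizations, the key observation is that $S_n$ is the unique tree of order $n$ with $\Delta=n-1$ and $P_n$ is the unique tree with $\Delta\le 2$. If $T\neq S_n$ then $k\in[2,n-2]$, so $j\in[\lceil n/2\rceil,n-2]$, and Theorem~\ref{THchain}(2) makes the step $S_n\succ H_{n,j}=H_{n,k}$ strict; chaining with $H_{n,k}\succeq T$ yields $S_n\succ T$. Similarly, if $T\neq P_n$ then $k\ge 3$, so Theorem~\ref{THchain}(1) makes $T_{n,k}\succ T_{n,2}=P_n$ strict, and chaining with $T\succeq T_{n,k}$ yields $T\succ P_n$. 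The only delicate point—and the closest thing to an obstacle—is confirming that composing a $\succeq$-step with a strict $\succ$-step along nested $[\xi(\cdot),0]$ intervals produces a strict $\succ$ relation; this is a short argument mirroring the proof of Theorem~\ref{partial order}, since equality of two $I$-polynomials on any interval forces them to coincide identically and would contradict the recorded strictness.
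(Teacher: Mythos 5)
Your proof is correct and follows essentially the same route as the paper: sandwich $T$ via Theorem~\ref{TH} and extend to $S_n$ and $P_n$ along the chains of Theorem~\ref{THchain}, using $H_{n,k}=H_{n,n-k}$ to reach the $H$-chain. You merely spell out details the paper leaves implicit (the index range of the $H$-chain, transitivity via nested intervals $[\xi(\cdot),0]$, and the composition of $\succeq$ with $\succ$ being strict), all of which are handled correctly.
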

\begin{proof}{Let $\Delta(T)=k$. For $k\in\{1,2,n-1\}$, there is nothing to prove. Suppose that  $n-1>k>2$. By Theorems~\ref{TH} and \ref{THchain}, $S_n=H_{n,n-1}\succ H_{n,k}\succeq T\succeq T_{n,k}\succ T_{n,2}=P_n$.}
\end{proof}

\begin{remark}\label{length chain} Let $(A,\geq)$ be a poset. We recall that the {\it length} of the chain $a_1> a_2>\cdots>a_{\l}$ in $A$ is defined as $\l$. The first part of Theorem~\ref{THchain} shows that, there is a chain of length $n-2$ in the poset $({\cal T}_n,\succeq)$, where ${\cal T}_n$ is the set of all trees of order $n$.  We think that the length of any chain in $({\cal T}_n,\succeq)$ is at most $n-2$. We note that there are some trees having the same maximum degree while they are comparable. For example, in ${\cal T}_7$, one can see that $T(3,3,3)\succ T(2,2,5)$.
\end{remark}

\begin{conj}\label{length chain conj} Let ${\cal T}_n$ be the set of all trees of order $n$. Then the length of any chain in $({\cal T}_n,\succeq)$ is at most $n-2$.
\end{conj}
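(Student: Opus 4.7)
The plan is to find an integer-valued invariant $\phi:{\cal T}_n\to\{2,3,\ldots,n-1\}$, a set of exactly $n-2$ values, such that $T_1\succ T_2$ forces $\phi(T_1)>\phi(T_2)$ strictly. Any such $\phi$ immediately bounds the length of a chain by $n-2$. The canonical chain $T_{n,n-1}\succ\cdots\succ T_{n,2}$ of Theorem~\ref{THchain}(1) makes both the maximum degree $\Delta(T_{n,k})=k$ and the number of leaves strictly decreasing along it, so these are the first candidates I would try.

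Testing $\Delta$ against an arbitrary $\succ$-step is where the main obstruction surfaces. Remark~\ref{length chain} records $T(3,3,3)\succ T(2,2,5)$ inside ${\cal T}_7$ with both trees sharing $\Delta=3$, and the dominance theorem announced in the abstract yields the longer horizontal chain $T(3,3,3)\succ T(4,3,2)\succ T(5,2,2)$ still at $\Delta=3$. So neither $\Delta$ nor the pendant count is strictly monotone, and any workable $\phi$ must distinguish trees of the same maximum degree. A natural lexicographic refinement is $(\Delta(T),\psi(T))$: on the starlike stratum $T=T(n_1,\ldots,n_k)$ one sets $\psi(T)$ to be the rank of $(n_1-1,\ldots,n_k-1)$ in a fixed linear extension of $\succ_d$, which by the abstract's dominance theorem is monotone along $\succ$ inside each starlike stratum of fixed $\Delta$. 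Extending $\psi$ coherently to non-starlike trees, and checking the behavior at $\succ$-steps that change $\Delta$, is the delicate part.

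In parallel I would pursue induction on $n$, with base case $n=4$ immediate from $S_4\succ P_4$. Given a chain $T_1\succ T_2\succ\cdots\succ T_\ell$ in ${\cal T}_n$, the plan is to delete a canonically chosen pendant $v_i$ from each $T_i$ and inherit $T_1\setminus v_1\succeq T_2\setminus v_2\succeq\cdots$ as a chain of length at least $\ell-1$ in ${\cal T}_{n-1}$; invoking the inductive bound $n-3$ then gives $\ell\leq n-2$. The operation $\star$ of Theorem~\ref{star operation} selects a canonical deepest leaf, Lemma~\ref{vertex-edge}(1) controls the change in the independence polynomial under deletion, and Theorem~\ref{subgraph} together with Theorem~\ref{plus minus} supply the basic positivity and additivity calculus of $\succeq$. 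The decisive obstacle is that $\succeq$ is not preserved under pendant deletion in general: the correction term $xI(T\setminus[v],x)$ in Lemma~\ref{vertex-edge}(1) can reverse the inequality between $I(T_i\setminus v_i,x)$ and $I(T_{i+1}\setminus v_{i+1},x)$. Proving a stability lemma that rules out this reversal for the canonical choice of $v_i$ is the step I expect to be hardest, and before attempting a full argument I would compute maximal chains exhaustively in ${\cal T}_7$ and ${\cal T}_8$ to confirm that the bound $n-2$ is really tight and to pin down the correct canonical pendant.
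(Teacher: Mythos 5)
This statement is posed as an open conjecture in the paper (it is exactly Conjecture~\ref{length chain conj}, motivated by Remark~\ref{length chain}); the paper supplies no proof of it, so there is nothing to compare your argument against --- and, more importantly, your text is not a proof either. It is a strategy outline in which every decisive step is explicitly deferred. For the invariant approach, you correctly observe that $\Delta$ and the leaf count fail to be strictly monotone (e.g.\ the chain $T(3,3,3)\succ T(4,3,2)\succ T(5,2,2)$ in ${\cal T}_7$ sits entirely at $\Delta=3$), but the proposed repair is left open in precisely the places where the difficulty lives: you do not define $\psi$ on non-starlike trees, you do not check monotonicity at steps that change $\Delta$, and you do not explain how the lexicographic pair $(\Delta,\psi)$ can be compressed into a range of exactly $n-2$ values. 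Note also that this route is essentially circular: in a finite poset a strictly order-preserving map into a chain of $n-2$ values exists if and only if every chain has length at most $n-2$ (take $\phi(T)$ to be the length of a longest chain below $T$), so ``find such a $\phi$'' is a restatement of the conjecture, not a reduction of it; the content would have to be an explicit invariant together with a verified monotonicity proof for \emph{all} comparable pairs of trees, including non-starlike ones, about which the paper proves nothing beyond Theorem~\ref{TH}.

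The induction approach has the same status. You yourself name the fatal issue: $\succeq$ is not known to be preserved under deletion of a (canonically chosen) pendant vertex, because of the correction term $xI(T\setminus[v],x)$ in Lemma~\ref{vertex-edge}(1), and the ``stability lemma'' that would control this reversal is exactly the missing mathematics --- it is not stated precisely, let alone proved. Moreover, even if deletion of one pendant from each $T_i$ produced a weak chain $T_1\setminus v_1\succeq\cdots\succeq T_\ell\setminus v_\ell$ in ${\cal T}_{n-1}$, you would still need strictness at all but at most one step (i.e.\ that at most one consecutive pair becomes ${\cal I}$-equivalent after deletion) to conclude $\ell-1\leq n-3$; nothing in the paper (whose tools here are Theorems~\ref{subgraph}, \ref{plus minus}, \ref{two part}, \ref{two part strict} and the $\star$-operation of Theorem~\ref{star operation}) supplies such a statement, and the paper's own results only produce chains (Theorem~\ref{THchain}, Theorem~\ref{main theorem}) rather than upper bounds on their length. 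So the proposal identifies plausible lines of attack and the genuine obstacles along them, but it does not close any of those obstacles; the conjecture remains unproved by it.
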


\section{Some properties of the poset $(\mathbb{R}^n,\succeq)$ and convertibility}

{\rm In this section we obtain some results related the ordering $\succeq$. We will use these results, in the next section, to investigate about Conjecture~\ref{total-tree} and show that this conjecture is valid for some families of starlike trees.

Let ${\cal A}_n=\{(x_1,\ldots,x_n)\in\mathbb{R}^n:\,x_1\geq x_2\geq\cdots\geq x_n\}$. Let $X=(x_1,\ldots,x_n),Y=(y_1,\ldots,y_n)\in{\cal A}_n$.  By $X\succeq Y$ we mean $X=Y$ or there exists $1\leq j\leq n$ such that $x_1=y_1,\ldots,x_{j-1}=y_{j-1}$ and $x_j>y_j$. We let $X\succ Y$ if and only if $X\succeq Y$ and $X\neq Y$. It is easy  to see that $({\cal A}_n,\succeq)$ is a totaly ordered set.  By $X\succeq_d Y$ we mean
$$X\succeq_d Y \Longleftrightarrow \sum_{i=1}^jx_i\geq \sum_{i=1}^jy_i, \hbox{ for}\,\, j=1,\ldots,n.$$
Also we let $X\succ_d Y$ if and only if $X\succeq_dY$ and $X\neq Y$.}

\begin{thm}\label{dominate} Let $X,Y\in {\cal A}_n$. If $X\succeq_dY$, then $X\succeq Y$.
\end{thm}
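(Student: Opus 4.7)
The statement is essentially immediate from unwinding the definitions, so my plan is short and direct. The only thing that could cause trouble is making sure I handle the case $X = Y$ separately and that I correctly identify the first coordinate of disagreement.

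First I would dispose of the trivial case: if $X = Y$ then $X \succeq Y$ by the very definition of $\succeq$ on $\mathcal{A}_n$. So assume $X \neq Y$ and let
\[
j = \min\{i : x_i \neq y_i\},
\]
which is well-defined since $X$ and $Y$ differ in at least one coordinate and $1 \leq j \leq n$. By the choice of $j$ we have $x_1 = y_1, \ldots, x_{j-1} = y_{j-1}$.

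Next I would invoke the hypothesis $X \succeq_d Y$ at index $j$, which gives $\sum_{i=1}^{j} x_i \geq \sum_{i=1}^{j} y_i$. Subtracting the common sum $\sum_{i=1}^{j-1} x_i = \sum_{i=1}^{j-1} y_i$ from both sides yields $x_j \geq y_j$, and since $x_j \neq y_j$ by the choice of $j$, we conclude $x_j > y_j$. This is exactly the condition required for $X \succ Y$, hence $X \succeq Y$, completing the proof. There is no real obstacle here; the argument is a one-line unpacking of the two definitions, and the only subtlety is being careful that the first index of disagreement exists, which is why the case $X = Y$ is treated first.
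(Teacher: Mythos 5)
Your proof is correct and is essentially the paper's argument run directly rather than by contradiction: the paper supposes $Y\succ X$ and contradicts the hypothesis via the $j$-th partial sums at the first index of disagreement, while you use the same partial-sum comparison at that index to conclude $x_j>y_j$ outright (which also avoids the implicit appeal to totality of $\succeq$ on ${\cal A}_n$). The key computation is identical, so nothing further is needed.
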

\begin{proof}{Suppose that $Y\succ X$. Thus there exists $j\in\{1,\ldots,n\}$ such that $x_1=y_1,\ldots,x_{j-1}=y_{j-1}$ and $y_j>x_j$. This shows that $\sum_{i=1}^jy_j>\sum_{i=1}^jx_j$, a contradiction.}
\end{proof}

\begin{remark}\label{dominate-converse} Note that the converse of Theorem~\ref{dominate} is not valid. Clearly $(7,2,2)\succeq(5,5,1)$ but $(7,2,2)\nsucceq_d(5,5,1)$.
\end{remark}

{\rm Let $n_1,m_1,\ldots,n_k,m_k$ be some real numbers. By $\{n_1,\ldots,n_k\}\succeq \{m_1,\ldots,m_k\}$ we mean there exist two permutations $\pi$ and $\sigma$ on the set $\{1,\ldots,k\}$ such that $n_{\pi(1)}\geq\cdots\geq n_{\pi(k)}$, $m_{\sigma(1)}\geq\cdots\geq m_{\sigma(k)}$ and $(n_{\pi(1)},\ldots,n_{\pi(k)})\succeq(m_{\sigma(1)},\ldots,m_{\sigma(k)})$. Also we let $\{n_1,\ldots,n_k\}\succeq_d \{m_1,\ldots,m_k\}$ if $(n_{\pi(1)},\ldots,n_{\pi(k)})\succeq_d(m_{\sigma(1)},\ldots,m_{\sigma(k)})$. Similarly one can define the notations $\{n_1,\ldots,n_k\}\succ \{m_1,\ldots,m_k\}$
and $\{n_1,\ldots,n_k\}\succ_d \{m_1,\ldots,m_k\}$.}

\begin{thm}\label{adding} Let $n_1,m_1,\ldots,n_k,m_k$ and $x_1,\ldots,x_t$ be some real numbers. Then the following hold:
\begin{enumerate}
\item[1)] $\{n_1,\ldots,n_k\}\succeq \{m_1,\ldots,m_k\}$ if and only if $$\{n_1,\ldots,n_k,x_1,\ldots,x_t\}\succeq\{m_1,\ldots,m_k,x_1,\ldots,x_t\}.$$
\item[2)] $\{n_1,\ldots,n_k\}\succeq_d \{m_1,\ldots,m_k\}$ if and only if
$$\{n_1,\ldots,n_k,x_1,\ldots,x_t\}\succeq_d\{m_1,\ldots,m_k,x_1,\ldots,x_t\}.$$
\end{enumerate}
\end{thm}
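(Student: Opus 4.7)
The plan is to reduce both parts to the case $t=1$ by a straightforward induction on $t$ (apply the single-element version with the ambient multiset growing by one member at each step), and then prove the base case directly. The two parts use different tools: part~(2) is handled by a slick partial-sum identity, while part~(1) requires a short case analysis on where $x$ lands relative to the first index of disagreement.

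For part~(2), I would pass to partial sums. Write $\tilde S_j(M)$ for the sum of the $j$ largest members of a multiset $M$, with $\tilde S_0=0$; then $A\succeq_d B$ is by definition the family of inequalities $\tilde S_j(A)\geq\tilde S_j(B)$ for every $j$. A short case check, according as $x$ lies above or below the $j$-th largest entry of $A$, yields
\[
  \tilde S_j(A\cup\{x\}) \;=\; \max\bigl\{\tilde S_j(A),\; \tilde S_{j-1}(A)+x\bigr\},
\]
together with the inverse formula
\[
  \tilde S_j(A) \;=\; \min\bigl\{\tilde S_j(A\cup\{x\}),\; \tilde S_{j+1}(A\cup\{x\})-x\bigr\}.
\]
The ``only if'' direction is then immediate from monotonicity of $\max$ in each argument applied to the first identity, and the ``if'' direction follows from monotonicity of $\min$ applied to the second. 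The top index $j=|A|+1$ is handled trivially since $\tilde S_{|A|+1}(A\cup\{x\})=\tilde S_{|A|}(A)+x$.

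For part~(1), I would first exploit the fact that $\succeq$ is a \emph{total} order on sorted $k$-tuples to reduce the converse to the forward direction: if one had both $A\cup\{x\}\succ B\cup\{x\}$ and $B\succ A$, applying the forward direction to the latter would give $B\cup\{x\}\succ A\cup\{x\}$, a contradiction; equality of the enlarged multisets reduces to equality of the originals by deleting one copy of $x$. For the forward direction, write $A=(a_1,\dots,a_k)$ and $B=(b_1,\dots,b_k)$ in decreasing order, let $j$ be the first index where they disagree (so $a_j>b_j$), and split on where $x$ sits. If $x\geq a_{j-1}=b_{j-1}$, or $a_{j-1}>x\geq a_j$, then $x$ inserts at the same position $\leq j$ in both sorted sequences, so the new tuples agree through position $j$ and first disagree at position $j+1$, where the $A$-side is $a_j>b_j$. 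If $x\in[b_j,a_j)$, then $x$ inserts at position $j$ in $B$ but strictly later in $A$, so the new tuples first disagree at position $j$, with $A$-side entry $a_j>x$. If $x<b_j$, both insertions happen after position $j$, so the new tuples first disagree at position $j$, with $a_j>b_j$. In every case the $A$-side strictly dominates at the first disagreement, giving $A\cup\{x\}\succ B\cup\{x\}$.

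The main obstacle is the bookkeeping for part~(1): ensuring the insertion-position analysis covers all sub-cases correctly and handles ties (fix once and for all the convention of inserting $x$ as early as possible among equal values). Part~(2) is in fact easier than it looks, because the two $\max$/$\min$ identities above collapse what would otherwise be a four-way case split into a single monotonicity step.
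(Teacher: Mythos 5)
Your proposal is correct. Part (1) is essentially the paper's own argument: the same reduction to $t=1$, the same three-way split according to whether $x\geq n_j$, $n_j>x\geq m_j$ or $m_j>x$ at the first index $j$ of disagreement, and the same converse-by-totality trick (assume the reverse strict inequality, apply the forward direction, contradict antisymmetry). Part (2) is where you genuinely diverge: the paper proves the forward direction by re-running the same three-case analysis and checking partial sums in each case, and it disposes of the converse only by sketching a further case split on the insertion positions $r,s$ of $x$ in the two sorted lists ($r<s$, $r=s$, $r>s$), leaving the details to the reader. You instead isolate the two identities $\tilde S_j(A\cup\{x\})=\max\{\tilde S_j(A),\,\tilde S_{j-1}(A)+x\}$ and $\tilde S_j(A)=\min\{\tilde S_j(A\cup\{x\}),\,\tilde S_{j+1}(A\cup\{x\})-x\}$ (both immediate, since the $j$ largest members of $A\cup\{x\}$ either contain $x$ or do not), and both directions then drop out of monotonicity of $\max$ and $\min$ in each argument, with the top index handled by $\tilde S_{k+1}(A\cup\{x\})=\tilde S_k(A)+x$. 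This buys a shorter, symmetric treatment of the two implications and, in particular, a fully written-out proof of the converse of (2), which is the step the paper compresses most; the paper's route, by contrast, keeps parts (1) and (2) structurally parallel by recycling one case framework. Your remaining bookkeeping in (1) (tie convention, and the degenerate case $j=1$ where the condition $x\geq a_{j-1}$ should simply be read as $x\geq a_j$ together with the second sub-case) is routine and does not hide a gap.
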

\begin{proof}{It suffices to prove theorem for $t=1$. On the other hand we can suppose that $n_1\geq\cdots\geq n_k$ and $m_1\geq\cdots\geq m_k$ (by changing the indexes). If $(n_1,\ldots,n_k)=(m_1,\ldots,m_k)$, there is nothing to prove. Now assume that there exists $j\in\{1,\ldots,k\}$ such that $n_1=m_1,\ldots,n_{j-1}=m_{j-1}$ and $n_j>m_j$. Let $\{n_1,\ldots,n_k,x_1\}=\{z_1,\ldots,z_{k+1}\}$ and $\{m_1,\ldots,m_k,x_1\}=\{w_1,\ldots,w_{k+1}\}$, such that $z_1\geq \cdots\geq z_{k+1}$ and $w_1\geq \cdots\geq w_{k+1}$. First, we prove that $(z_1,\ldots,z_{k+1})\succeq (w_1,\ldots,w_{k+1})$. We have the following cases:
\begin{enumerate}
\item[1)] $x_1\geq n_j$. Thus $z_1=w_1,\ldots ,z_j=w_j$ and $z_{j+1}=n_j$, $w_{j+1}=m_j$. We are done.
\item[2)] $n_j>x_1\geq m_j$. Therefore $z_1=w_1=n_1,\ldots,z_{j-1}=w_{j-1}=n_{j-1}$ and $z_j=n_j$, $w_j=x_1$. We are done.
\item[3)] $m_j>x_1$. So $z_1=w_1=n_1,\ldots,z_{j-1}=w_{j-1}=n_{j-1}$ and $z_j=n_j$, $w_j=m_j$. We are done.
\end{enumerate}

Conversely, suppose that $\{n_1,\ldots,n_k,x_1\}\succeq \{m_1,\ldots,m_k,x_1\}$. We show that $\{n_1,\ldots,n_k\}\succeq\{m_1,\ldots,m_k\}$. If $\{n_1,\ldots,n_k\}=\{m_1,\ldots,m_k\}$, then we are done. Let $\{n_1,\ldots,n_k\}\neq\{m_1,\ldots,m_k\}$. By  contradiction suppose that $\{m_1,\ldots,m_k\}\succeq \{n_1,\ldots,n_k\}$. By the first part of the theorem, $\{m_1,\ldots,m_k,x_1\}\succeq \{n_1,\ldots,n_k,x_1\}$. This shows that $\{n_1,\ldots,n_k\}=\{m_1,\ldots,m_k\}$, a contradiction.

Now, we prove the second part of the theorem. More precisely, we show that $(z_1,\ldots,z_{k+1})\succeq_d (w_1,\ldots,w_{k+1})$. We consider the following cases:
\begin{enumerate}
\item[1)] $x_1\geq n_j$. Thus $z_1=w_1,\ldots ,z_j=w_j$ and $z_1,\ldots,z_j\in\{n_1,\ldots,n_{j-1},x_1\}$.
   Clearly, $z_1+\cdots+z_i=w_1+\cdots+w_i$, for $i\in\{1,\ldots,j\}$.  Also for $i\geq j+1$, $\sum_{i=j+1}^hz_i=\sum_{i=j}^{h-1}n_i$ and $\sum_{i=j+1}^hw_i=\sum_{i=j}^{h-1}m_i$. Since $(n_1,\ldots,n_k)\succeq_d(m_1,\ldots,m_k)$, the proof is complete.
\item[2)] $n_j>x_1\geq m_j$. Therefore $z_1=w_1=n_1,\ldots,z_{j-1}=w_{j-1}=n_{j-1}$ and $z_j=n_j$, $w_j=x_1$. Let $n_t>x_1\geq n_{t+1}$, for some $t\geq j$. Considering the cases $j\geq h\geq 1$, $t\geq h\geq j+1$ and $h\geq t+1$, it is not hard to see that $\sum_{i=1}^hz_i\geq \sum_{i=1}^hw_i$. This shows that $(z_1,\ldots,z_{k+1})\succeq_d(w_1,\ldots,w_{k+1})$.
\item[3)] $m_j>x_1$. Similar to the previous cases one can obtain the result.
\end{enumerate}
Conversely, let $\{n_1,\ldots,n_k,x_1\}\succeq_d \{m_1,\ldots,m_k,x_1\}$. We show that $\{n_1,\ldots,n_k\}\succeq_d \{m_1,\ldots,m_k\}$. Suppose that $n_1\geq \cdots\geq n_r\geq x_1\geq n_{r+1}\geq\cdots\geq n_k$ and $m_1\geq \cdots\geq m_s\geq x_1\geq m_{s+1}\geq\cdots\geq m_k$, for some $r,s\in\{1,\ldots,k\}$. Considering the cases $r<s$, $r=s$ and $r>s$, one can easily obtain the result.}
 \end{proof}



}

{\rm Let $X, Y\in{\cal A}_n$, where ${\cal A}_n=\{(x_1,\ldots,x_n)\in\mathbb{R}^n:\,x_1\geq x_2\geq\cdots\geq x_n\}$.
 Suppose that $e_j=(\underbrace{0,\ldots,0}_{j-1},1,\underbrace{0,\ldots,0}_{n-j})$ and $e_{jk}=e_k-e_j$. We say that $X$ is {\it convertible to} $Y$ if there is a sequence $X=Y_0\preceq Y_1\preceq\cdots\preceq Y_t=Y$ in ${\cal A}_n$, such that for every $i\in\{1,\ldots,t\}$, $Y_i=Y_{i-1}-e_{jk}$ for some $k>j$. For example $X=(9,9,6,6)$ is convertible to $Y=(10,8,7,5)$ ( because $X=(9,9,6,6)\preceq (9,9,7,5)\preceq(10,8,7,5)$ and we have   $(9,9,7,5)=(9,9,6,6)-e_{34}$ and $(10,8,7,5)=(9,9,7,5)-e_{12}$). On the other hand one can easily see that $(8,8,4)$ is not convertible to $(10,5,5)$. Also $(\frac{1}{2},\frac{1}{2})$ is not convertible to $(1,0)$.  In sequel we obtain the sufficient and necessary condition for the convertibility. First we prove some lemmas.}

\begin{lem}\label{m_1,m_2,...,m_2} Let $m_1,m_2,n_1,\ldots,n_{j+1}$ be  some integers. Let $n_1\geq m_1+1$ and $(n_1,\ldots,n_{j+1})\succeq_d(m_1,\underbrace{m_2,\ldots,m_2}_{j})$. Then $(n_1,\ldots,n_{j+1})\succeq_d(m_1+1,\underbrace{m_2,\ldots,m_2}_{j-1},m_2-1)$.
\end{lem}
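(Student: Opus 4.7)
Write $X=(n_1,\ldots,n_{j+1})$, $Y=(m_1,m_2,\ldots,m_2)$ (with $j$ copies of $m_2$), and $Y'=(m_1+1,m_2,\ldots,m_2,m_2-1)$ (with $j-1$ copies of $m_2$). First I verify that $Y'\in{\cal A}_{j+1}$: since $Y\in{\cal A}_{j+1}$ we have $m_1\geq m_2$, so $m_1+1\geq m_2\geq m_2-1$, and $Y'$ is indeed non-increasing. A direct computation then gives
$$\sum_{k=1}^{i} y'_k=\sum_{k=1}^{i} y_k+1 \quad \text{for } 1\leq i\leq j, \qquad \sum_{k=1}^{j+1} y'_k=\sum_{k=1}^{j+1} y_k.$$
Thus to conclude $X\succeq_d Y'$ it suffices to verify three partial-sum inequalities: the $i=1$ case $n_1\geq m_1+1$, which is given; the $i=j+1$ case $\sum_{k=1}^{j+1} n_k\geq m_1+jm_2$, which is part of $X\succeq_d Y$; and the ``middle'' inequalities $\sum_{k=1}^{i}n_k\geq m_1+(i-1)m_2+1$ for $2\leq i\leq j$.

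For the middle inequalities I would argue by contradiction, exploiting the integrality of the entries. Suppose some $i$ with $2\leq i\leq j$ violates the bound. Since $X\succeq_d Y$ yields $\sum_{k=1}^{i}n_k\geq m_1+(i-1)m_2$ and all quantities are integers, the only possibility is equality $\sum_{k=1}^{i}n_k=m_1+(i-1)m_2$. Combined with $n_1\geq m_1+1$ this gives $n_2+\cdots+n_i\leq (i-1)m_2-1$, so the smallest of $n_2,\ldots,n_i$, namely $n_i$, satisfies $n_i\leq m_2-\tfrac{1}{i-1}<m_2$; integrality then forces $n_i\leq m_2-1$. By the ordering $n_i\geq n_{i+1}\geq\cdots\geq n_{j+1}$, every subsequent entry is also at most $m_2-1$, and summing yields
$$\sum_{k=1}^{j+1}n_k\leq m_1+(i-1)m_2+(j+1-i)(m_2-1)=m_1+jm_2-(j+1-i)<m_1+jm_2,$$
since $j+1-i\geq 1$. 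This contradicts $X\succeq_d Y$.

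The main obstacle lies in this middle range $2\leq i\leq j$; the two endpoint inequalities are immediate. Three ingredients are essential: the integer hypothesis (without which the strict inequality $n_i<m_2$ would not upgrade to $n_i\leq m_2-1$ and the slack in the final partial sum would vanish), the decreasing ordering of $X$ (used to propagate $n_i\leq m_2-1$ to all subsequent coordinates), and the exact matching $\sum_{k=1}^{j+1}y'_k=\sum_{k=1}^{j+1}y_k$, which makes the contradiction with $X\succeq_d Y$ possible. The boundary case $j=1$ is trivial since the middle range is then empty.
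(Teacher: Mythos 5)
Your proof is correct, and it takes a genuinely different route from the paper's. The paper argues by induction on $j$: it subtracts the total slack $t$ from $n_{j+1}$ to force $\sum_{i=1}^{j}n_i+n_{j+1}'=m_1+jm_2$, shows (using $n_1\geq m_1+1$, the ordering, and integrality) that the deficit $\l=m_2-n_{j+1}'$ is at least $1$, then subtracts $\l$ from $n_j$, applies the induction hypothesis for $j-1$, and restores the removed amounts. You instead verify the conclusion directly: after computing that the partial sums of $(m_1+1,m_2,\ldots,m_2,m_2-1)$ exceed those of $(m_1,m_2,\ldots,m_2)$ by exactly $1$ in positions $1$ through $j$ and agree in position $j+1$, the only nontrivial work is the middle range $2\leq i\leq j$, which you settle by contradiction: if such a partial sum were tight, then $n_1\geq m_1+1$ forces $n_2+\cdots+n_i\leq (i-1)m_2-1$, hence $n_i\leq m_2-1$ by integrality, and monotonicity propagates this bound to the tail, making the full sum fall short of $m_1+jm_2$ — contradicting the hypothesis. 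Both arguments rest on the same two ingredients you single out (integrality and the non-increasing order, which are implicit in the paper's use of $\succeq_d$ on sorted integer tuples), but yours is shorter, non-inductive, and makes explicit exactly where integrality is indispensable (your real-valued remark matches the paper's observation that $(\frac{1}{2},\frac{1}{2})$ is not convertible to $(1,0)$); the paper's inductive reduction, on the other hand, is phrased in the same "shift one unit" style as Lemma~\ref{e12} and Theorem~\ref{convert}, where the lemma is consumed.
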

\begin{proof}{ We proceed by induction on $j$. For $j=1$ there is nothing to prove. Now let $j\geq 2$. Since $(n_1,\ldots,n_{j+1})\succeq_d(m_1,\underbrace{m_2,\ldots,m_2}_{j})$, $\sum_{i=1}^jn_i\geq m_1+(j-1)m_2$ and $t=\sum_{i=1}^{j+1}n_i-(m_1+jm_2)\geq 0$.
Let $n_{j+1}'=n_{j+1}-t$. Thus
\begin{equation}\label{nj+1}
\hbox{$\sum_{i=1}^jn_i+n_{j+1}'=m_1+jm_2$,}
\end{equation}
also we have $\sum_{i=1}^jn_i\geq m_1+(j-1)m_2$. This shows that $m_2\geq n_{j+1}'$. If $n_{j+1}'=m_2$, then $n_2\geq\cdots\geq n_{j+1}\geq n_{j+1}'=m_2$. Thus $\sum_{i=1}^jn_i+n_{j+1}'\geq m_1+1+jm_2$. This contradicts the Equation~(\ref{nj+1}). Let $\l=m_2-n_{j+1}'$. Therefore $\l\geq 1$. By the Equation~(\ref{nj+1}) we obtain
 \begin{equation}\label{l}
\hbox{$\sum_{i=1}^jn_i=m_1+(j-1)m_2+\l$.}
\end{equation}
Using the Equality~(\ref{l}) one can easily see that $(n_1,\ldots,n_{j-1},n_j-\l)\succeq_d(m_1,\underbrace{m_2,\ldots,m_2}_{j-1})$. Now, by the induction hypothesis for $j-1$, we have the following inequality
$(n_1,\ldots,n_{j-1},n_j-\l)\succeq_d(m_1+1,\underbrace{m_2,\ldots,m_2}_{j-2},m_2-1)$. Since $\l\geq 1$, by the Equality~(\ref{l}) one can see that
$(n_1,\ldots,n_{j},n_{j+1}')\succeq_d(m_1+1,\underbrace{m_2,\ldots,m_2}_{j-1},m_2-1)$. On the other hand $(n_1,\ldots,n_{j},n_{j+1})\succeq_d (n_1,\ldots,n_{j},n_{j+1}')$, the proof is complete.}
\end{proof}

\begin{lem}\label{e12} Let ${\cal B}_k=\{(z_1,\ldots,z_k)\in\mathbb{Z}^k:\,z_1\geq z_2\geq\cdots\geq z_k\}$. Let $N=(n_1,\ldots,n_k), M=(m_1,\ldots,m_k)\in{\cal B}_k$. Let $n_1\geq m_1+1$ and $N\succeq_{d}M$.
Then $\{n_1,\ldots,n_k\}\succeq_d\{m_1+1,m_2-1,m_3,\ldots,m_k\}\succeq_d\{m_1,\ldots,m_k\}$ (In the other words  $N\succeq_d M'\succeq_d M$, where $M'\in {\cal B}_k$  and $M'=M-e_{12}$).
\end{lem}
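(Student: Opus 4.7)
The plan is to set $M' := (m_1+1, m_2-1, m_3, \ldots, m_k)$ (not necessarily sorted) and let $\hat{M}' \in {\cal B}_k$ be its sorted rearrangement, then establish the two inequalities $N \succeq_d \hat{M}'$ and $\hat{M}' \succeq_d M$ separately. Throughout, I write $S_j(X)$ for the sum of the top $j$ entries of a multiset $X$.

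For the right-hand inequality I would describe $\hat{M}'$ explicitly. Let $p \in \{2,\ldots,k\}$ be the largest index with $m_p = m_2$ (so $p=k$ if $m_2 = m_3 = \cdots = m_k$, and $p=2$ if $m_2 > m_3$). By integrality, $m_{p+1} \leq m_2 - 1 < m_3 = \cdots = m_p$, so
\[
\hat{M}' = (m_1+1,\ m_3,\ m_4,\ldots, m_p,\ m_2-1,\ m_{p+1},\ldots, m_k).
\]
A routine term-by-term computation then gives $S_j(\hat{M}') = S_j(M) + 1$ for $1 \leq j \leq p-1$ and $S_j(\hat{M}') = S_j(M)$ for $p \leq j \leq k$, whence $\hat{M}' \succeq_d M$.

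To show $N \succeq_d \hat{M}'$ I would verify $S_j(N) \geq S_j(\hat{M}')$ for each $j$. The cases $j = 1$ and $j \geq p$ are immediate: $S_1(N) = n_1 \geq m_1 + 1 = S_1(\hat{M}')$, and for $j \geq p$, $S_j(N) \geq S_j(M) = S_j(\hat{M}')$ by $N \succeq_d M$. The heart of the argument is the range $2 \leq j \leq p-1$, where $S_j(\hat{M}') = S_j(M) + 1$, so a strict improvement is required. I would argue by contradiction: if $S_j(N) < S_j(M) + 1$, integrality together with $N \succeq_d M$ forces $S_j(N) = S_j(M)$. Since $j+1 \leq p$ gives $m_{j+1} = m_2$, this yields
\[
n_{j+1} = S_{j+1}(N) - S_j(N) \geq S_{j+1}(M) - S_j(M) = m_{j+1} = m_2,
\]
and sortedness of $N$ implies $n_i \geq n_{j+1} \geq m_2$ for $i = 2,\ldots,j$, so
\[
S_j(N) \geq n_1 + (j-1)m_2 \geq (m_1+1) + (j-1)m_2 = S_j(M) + 1,
\]
contradicting $S_j(N) = S_j(M)$.

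The principal obstacle is the case $m_2 = m_3$ (equivalently $p \geq 3$), in which $M'$ fails to be sorted and $\hat{M}'$ strictly dominates $M$ at several positions. The argument above uses integrality of the entries in an essential way to promote the single strict gap $n_1 \geq m_1 + 1$ into strict gaps at every position $j \leq p-1$; this is the key feature that would make a naive real-valued analogue of the lemma fail.
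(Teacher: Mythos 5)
Your proof is correct, and it differs from the paper's in how the key step is handled. The overall skeleton is the same and essentially forced by the statement: sort the multiset $\{m_1+1,m_2-1,m_3,\ldots,m_k\}$, observe that its partial sums agree with those of $M$ except for a $+1$ on the initial segment $1\le j\le p-1$ covering the block of entries equal to $m_2$ (your $p$ is the paper's $j+1$ in its case $m_2=m_3$), and then check that $N$ dominates the sorted tuple. Where you diverge is in the middle range $2\le j\le p-1$, where a strict improvement $S_j(N)\ge S_j(M)+1$ is needed: the paper outsources this to a separate auxiliary lemma about tuples of the form $(m_1,m_2,\ldots,m_2)$, proved by induction on the length of the constant block via an explicit truncation of $n_{j+1}$; you prove it directly by contradiction, noting that integrality and $N\succeq_d M$ would force $S_j(N)=S_j(M)$, whence dominance at level $j+1$ gives $n_{j+1}\ge m_{j+1}=m_2$, so $n_2,\ldots,n_j\ge m_2$, and together with $n_1\ge m_1+1$ this contradicts the assumed equality. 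Both routes rely essentially on integrality (as you observe, the real-valued analogue fails, e.g.\ $(\frac12,\frac12)$ versus $(1,0)$), but your argument is self-contained and non-inductive, effectively absorbing the paper's auxiliary lemma into a short observation; the paper's version isolates that lemma as a standalone statement at the cost of a longer, more computational induction.
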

\begin{proof}{ By Theorem~\ref{adding}, $M'\succeq M$. Since $m_2\geq m_3$, we consider the following cases,:
\begin{enumerate}
\item[1)] Let $m_2>m_3$. Thus $M'=(m_1+1,m_2-1,m_3,\ldots,m_k)$. So in this case we have $N\succeq_d M'$.
\item[2)] Let $m_2=m_3$. Suppose that $m_2=m_3=\cdots=m_{j+1}$ and $m_{j+1}>m_{j+2}$ for some $j\geq 2$. Thus $M'=(m_1+1,\underbrace{m_2,\ldots,m_2}_{j-1},m_2-1,m_{j+2},\ldots,m_k)$. Now, we show that $N\succeq_dM'$. One can easily see that it suffices to prove that $(n_1,\ldots,n_j)\succeq_d(m_1+1,\underbrace{m_2,\ldots,m_2}_{j-1})$. Since $N\succeq_d M$,
      $(n_1,\ldots,n_{j+1})\succeq_d(m_1,\ldots,m_{j+1})$. So $(n_1,\ldots,n_{j+1})\succeq_d(m_1,\underbrace{m_2,\ldots,m_2}_{j})$. By Lemma~\ref{m_1,m_2,...,m_2} and the fact that $n_1\geq m_1+1$, $(n_1,\ldots,n_j)\succeq_d(m_1+1,\underbrace{m_2,\ldots,m_2}_{j-1})$. The proof is complete.
\end{enumerate}
}
\end{proof}

{\rm The following result is a direct consequence of Lemma~\ref{e12}.}

\begin{cor}\label{-e12} Let ${\cal B}_n=\{(z_1,\ldots,z_n)\in\mathbb{Z}^k:\,z_1\geq z_2\geq\cdots\geq z_n\}$. Let $X=(x_1,\ldots,x_n), Y=(y_1,\ldots,y_n)\in{\cal B}_n$. Let $x_1=y_1,\ldots,x_j=y_j$ and $y_{j+1}\geq x_{j+1}+1$. If  $Y\succeq_{d}X$, then $\{y_1,\ldots,y_n\}\succeq_d\{x_1,\ldots,x_j,x_{j+1}+1,x_{j+2}-1,x_{j+3},\ldots,x_n\}\succeq_d\{x_1,\ldots,x_n\}$.
\end{cor}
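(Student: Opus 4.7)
The plan is to reduce Corollary~\ref{-e12} to Lemma~\ref{e12} by ``stripping off'' the first $j$ common coordinates of $X$ and $Y$, applying Lemma~\ref{e12} to the shortened vectors, and then restoring the common coordinates. Theorem~\ref{adding} (part 2) is the tool that justifies both the stripping and the restoring, since it asserts that $\succeq_d$ on multisets is preserved in both directions under adjoining (or removing) a common sub-multiset.

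More concretely, first I would set $X'=(x_{j+1},\ldots,x_n)$ and $Y'=(y_{j+1},\ldots,y_n)$; both lie in ${\cal B}_{n-j}$ because $X,Y\in{\cal B}_n$. The hypothesis $Y\succeq_d X$ together with $x_1=y_1,\ldots,x_j=y_j$ means that the multisets $\{y_1,\ldots,y_n\}$ and $\{x_1,\ldots,x_n\}$ agree outside the common part $\{x_1,\ldots,x_j\}$, so the ``only if'' direction of Theorem~\ref{adding}(2), applied $j$ times (or with $t=j$), yields $Y'\succeq_d X'$ as multisets in ${\cal B}_{n-j}$.

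Next I would invoke Lemma~\ref{e12} on the pair $(Y',X')$: the lemma's hypothesis ``$n_1\geq m_1+1$'' is precisely the given $y_{j+1}\geq x_{j+1}+1$, and the conclusion is
\[
\{y_{j+1},\ldots,y_n\}\succeq_d\{x_{j+1}+1,\,x_{j+2}-1,\,x_{j+3},\ldots,x_n\}\succeq_d\{x_{j+1},\ldots,x_n\}.
\]
Finally, applying the ``if'' direction of Theorem~\ref{adding}(2) with the adjoined common elements $x_1,\ldots,x_j$ to both inequalities above, we recover
\[
\{y_1,\ldots,y_n\}\succeq_d\{x_1,\ldots,x_j,\,x_{j+1}+1,\,x_{j+2}-1,\,x_{j+3},\ldots,x_n\}\succeq_d\{x_1,\ldots,x_n\},
\]
which is exactly the desired conclusion. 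No real obstacle is expected: the corollary is a re-indexed version of Lemma~\ref{e12}, and the only thing to verify is that Theorem~\ref{adding} legitimately handles the multiset passage in both directions, which it does by its ``if and only if'' formulation.
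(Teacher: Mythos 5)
Your proposal is correct and matches the paper's intent: the paper simply states Corollary~\ref{-e12} as a direct consequence of Lemma~\ref{e12}, and your reduction---removing the common first $j$ entries via Theorem~\ref{adding}(2), applying Lemma~\ref{e12} to $(y_{j+1},\ldots,y_n)$ and $(x_{j+1},\ldots,x_n)$, then adjoining $x_1,\ldots,x_j$ back---is exactly the intended elaboration of that one-line argument.
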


{\rm Now, we are in a position to prove the main theorem of this section. The following theorem states the necessary and sufficient condition for convertibility.}

\begin{thm}\label{convert} Let ${\cal B}_n=\{(z_1,\ldots,z_n)\in\mathbb{Z}^n:\,z_1\geq z_2\geq\cdots\geq z_n\}$. Let $X,Y\in {\cal B}_n$. Then $X$
is  convertible to $Y$ if and only if $Y\succeq_d X$.
\end{thm}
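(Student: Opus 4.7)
I treat the two implications separately.

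\emph{Forward direction.} A single elementary step $Y_{i}=Y_{i-1}-e_{jk}$ with $j<k$ fixes the total sum and alters the $\ell$-th partial sum by $+1$ when $j\leq\ell<k$ and by $0$ otherwise. Summing over the chain $X=Y_0,\ldots,Y_t=Y$ shows that every partial sum of $Y$ is at least the corresponding partial sum of $X$, which is exactly $Y\succeq_d X$.

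\emph{Backward direction.} Assume $Y\succeq_d X$ together with $\sum y_i=\sum x_i$ (which is forced by the sum-preserving nature of the elementary moves, and is automatic in the paper's intended applications). Put $S_j:=\sum_{i=1}^{j}(y_i-x_i)\geq 0$ and induct on the nonnegative integer $\Phi(X,Y):=\sum_{j=1}^{n-1}S_j$, which vanishes iff $X=Y$. When $X\neq Y$ let
\[p:=\min\{i:y_i>x_i\},\qquad k:=\min\{i>p:S_i=0\};\]
such $k$ exists since $S_p\geq 1$ and $S_n=0$. Set $X':=X-e_{pk}$; I will verify (a) $X'\in\mathcal{B}_n$, (b) $Y\succeq_d X'\succeq_d X$, and (c) $\Phi(X',Y)=\Phi(X,Y)-(k-p)<\Phi(X,Y)$. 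The inductive hypothesis then supplies a conversion $X'\to Y$, and prepending $X\to X'$ converts $X$ to $Y$.

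Parts (b) and (c) are routine book-keeping: $S_j$ drops by $1$ precisely on $p\leq j<k$, where $S_j\geq 1$ by minimality of $k$, and is unchanged elsewhere. The main obstacle is (a), which reduces to showing $x_k>x_{k+1}$ whenever $k<n$; I plan a short proof by contradiction. If $x_k=x_{k+1}$, then the identity $S_{k-1}-S_k=x_k-y_k$, combined with $S_{k-1}\geq 1$ and $S_k=0$, gives $y_k\leq x_k-1$; monotonicity of $Y$ then forces $y_{k+1}\leq y_k\leq x_{k+1}-1$, so $S_{k+1}=y_{k+1}-x_{k+1}<0$, contradicting $Y\succeq_d X$. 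This is precisely why the naive choice $k=p+1$ suggested by Corollary~\ref{-e12} can fail (it fails when $x_{p+1}=x_{p+2}$): the remedy is to let $k$ leap past any run of equal entries inside the interval where $S$ is strictly positive, which the chosen definition of $k$ does automatically. The remaining boundary checks ($x_{p-1}\geq x_p+1$ follows from $x_{p-1}=y_{p-1}\geq y_p\geq x_p+1$, and the neighbouring inequalities within the altered positions are immediate from $x_p\geq x_{p+1}$ and $x_{k-1}\geq x_k$) finish the verification of (a).
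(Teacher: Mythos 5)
Your argument is correct, and your backward direction follows a genuinely different route from the paper's. The paper first develops Lemma~\ref{e12} and Corollary~\ref{-e12} (themselves proved by an auxiliary induction), which say that adding $1$ to the first coordinate where $Y$ exceeds $X$ and subtracting $1$ from the next coordinate produces, \emph{after re-sorting the multiset}, a point sandwiched between $X$ and $Y$ in the $\succeq_d$ order; the conversion is then obtained by raising the first discrepancy one unit at a time and recursing on the later coordinates. The re-sorting in the case $m_2=m_3=\cdots$ of Lemma~\ref{e12} is exactly how the paper absorbs the run-of-equal-entries obstruction you identify: the sorted outcome of its nominal move coincides with a legal step $-e_{jk}$ whose $k$ jumps past the run. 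You reach such a step directly, by inducting on $\Phi(X,Y)=\sum_{j=1}^{n-1}S_j$ and taking $p$ to be the first discrepancy and $k$ the first index after $p$ where the partial-sum gap returns to $0$; your contradiction argument for $x_k>x_{k+1}$ (from $S_{k-1}\geq 1$, $S_k=0$ and monotonicity of $Y$) is where the run issue gets absorbed, and the book-keeping in (b) and (c) is right (note also that the chain condition $X\preceq X'$ required by the definition of convertibility follows from (b) together with Theorem~\ref{dominate}). Your version is more self-contained and makes termination explicit via the strictly decreasing nonnegative integer $\Phi$, whereas the paper stays at the level of the multiset comparisons already set up in Theorem~\ref{adding}, at the price of the two auxiliary lemmas.

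Concerning the extra hypothesis $\sum x_i=\sum y_i$: this is not a weakening in substance. Without it the ``if'' direction is false as literally stated (e.g.\ $X=(0,0)$, $Y=(1,1)$ satisfy $Y\succeq_d X$ but every step $-e_{jk}$ preserves the total sum), the paper's own recursion tacitly needs it (otherwise it can terminate with a discrepancy only in the last coordinate, which no step can repair), and in the intended application (Theorem~\ref{main theorem}) the sums are automatically equal. Making the assumption explicit, as you do, is the correct reading of the theorem rather than a gap.
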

\begin{proof}{ First suppose that $X$
is convertible to $Y$. Thus there is a sequence $X=Y_0\preceq Y_1\preceq\cdots\preceq Y_t=Y$ in ${\cal B}_n$, such that for every $i\in\{1,\ldots,t\}$, $Y_i=Y_{i-1}-e_{jk}$ for some $k>j$. To show that $Y\succeq_d X$, it suffices to prove that $Y_i\succeq_d Y_{i-1}$, for $i=1,\ldots,t$. Let $n\geq k>j\geq 1$ and $Y_i=Y_{i-1}-e_{jk}$. Suppose that $Y_{i-1}=(y_1,\ldots,y_n)$. Thus the components of $Y_i$ are $y_1,\ldots,y_{j-1},y_j+1,y_{j+1},\ldots,y_{k-1},y_k-1,y_{k+1},\ldots,y_n$. Trivially $\{y_{j}+1,y_{k}-1\}\succeq_d\{y_j,y_k\}$. By Theorem~\ref{adding}, $\{y_{j}+1,y_{k}-1\}\cup B
\succeq_d\{y_j,y_k\}\cup B$, where $B=\{y_1,\ldots,y_n\}\setminus\{y_j,y_k\}$. This shows that $Y_i\succeq_d Y_{i-1}$. This completes the proof of the first part.

Now, assume that $Y\succeq_d X$. We show that $X$ is  convertible to $Y$. Let $X=(x_1,\ldots,x_n)$ and $Y=(y_1,\ldots,y_n)$. By Theorem~\ref{dominate}, $Y\succeq X$. Suppose that $y_1=x_1,\ldots,y_j=x_j$ and $y_{j+1}>x_{j+1}$. Thus $y_{j+1}\geq x_{j+1}+1$. Assume that $y_{j+1}=x_{j+1}+\l$. For $i=1,\ldots,\l$, let $X_i=X_{i-1}-e_{(j+1)(j+2)}\in{\cal B}_n$ and $X_0=X$. By Corollary~\ref{e12}, $Y\succeq_dX_i\succeq_dX_{i-1}$. Thus we have $Y\succeq_dX_{\l}\succeq_dX_{\l-1}\succeq_d\cdots\succeq_dX_0$. Note that the first $j+1$ components of $Y$ and $X_{\l}$ are the same. Now, applying this procedure for the pairs $\{Y,X_{\l}\}, \{X_{\l},X_{\l-1}\},\ldots$, we conclude that $X$ is convertible to $Y$.}
\end{proof}

\section{The family of starlike trees and the ordering $\succeq$}

{\rm In this section we investigate about Conjecture~\ref{total-tree} and show that this conjecture is true for the family of starlike trees.

Let ${\cal T}_{n,k}=\{T(n_1,\ldots,n_k):\,n_1\geq\cdots\geq n_k\geq 2\,\,\,{\rm and}\,\sum_{i=1}^kn_i=n\}$. We guess that $({\cal T}_{n,k},\succeq)$ is a totaly ordered set. More precisely we have the following conjecture.}

\begin{conj}\label{starlike and d_G} Let $n_1\geq\cdots\geq n_k\geq 2$ and $m_1\geq\cdots\geq m_k\geq 2$. Let $T_1$ and $T_2$ be two trees of order $n$ and maximum degree $k$ such that $T_1=T(n_1,\ldots,n_k)$ and $T_2=T(m_1,\ldots,m_k)$. Then $(n_1,\ldots,n_k)\succ(m_1,\ldots,m_k)$ if and only if
 $T(m_1,\ldots,m_k)\succ T(n_1,\ldots,n_k)$.
\end{conj}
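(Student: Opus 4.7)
Plan.

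The conjecture has two directions, but only one is substantive. Since lex is a total order on decreasing sequences of fixed length and fixed sum (and both trees have the same order, forcing $\sum n_i = \sum m_i$), if $(n_1,\ldots,n_k)\not\succ(m_1,\ldots,m_k)$ in lex then either the sequences coincide---so the trees coincide---or $(m_1,\ldots,m_k)\succ(n_1,\ldots,n_k)$ in lex, and in the latter case the forward direction applied with the roles swapped gives $T(n_1,\ldots,n_k)\succ T(m_1,\ldots,m_k)$; the antisymmetry of $\succeq$ modulo $\mathcal{I}$-equivalence (Theorem~\ref{partial order}) then rules out $T(m_1,\ldots,m_k)\succ T(n_1,\ldots,n_k)$. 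So the conjecture reduces to the forward implication $(n)\succ(m)\text{ in lex}\Rightarrow T(m)\succ T(n)$.

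For the forward direction my main tool would be the explicit formula obtained by applying Lemma~\ref{vertex-edge}(1) at the central vertex. Writing $F_m(x)=I(P_m,x)$ with $F_0=1$,
\begin{equation*}
I(T(n_1,\ldots,n_k),x) \;=\; \prod_{i=1}^{k} F_{n_i-1}(x) \;+\; x\prod_{i=1}^{k} F_{n_i-2}(x).
\end{equation*}
If the first lex disagreement is at position $j$, the factors for $i<j$ are common and peel out of the difference $I(T(m),x)-I(T(n),x)$, localizing the problem to the tail indices $i\geq j$. The paper has already proved the dominance-version $(n)\succeq_d(m)\Rightarrow T(m)\succeq T(n)$ by reducing via Theorem~\ref{convert} to atomic $e_{jk}$-moves that shorten one path arm by a leaf and lengthen another by a leaf, each giving a $T\succ$ step. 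My strategy would be to extend this to lex by exhibiting, for every lex-comparable pair $(n)\succ(m)$, a chain of decreasing sequences interpolating from $(n)$ to $(m)$ in lex, in which every atomic step is either a dominance step (handled by the prior theorem) or a \emph{lex-only} step whose effect on $I(T,x)$ I would analyze directly via the formula above, using identities such as $F_{m-1}(x)^2 - F_m(x)F_{m-2}(x) = (-x)^m$ and sign information for $F_m$ on $[\xi(T(n)),0]$.

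The main obstacle is that lex and dominance are genuinely different orders under the conjecture's hypotheses. For example, $(7,3,3,2,2,2)$ and $(6,5,2,2,2,2)$ both have entries $\geq 2$ and sum $19$, are lex-comparable (the first is lex-larger at position $1$), yet are dominance-incomparable (the partial sums of the second exceed those of the first at $h=2$, while the reverse holds at $h=1$). So elementary $e_{jk}$-moves cannot by themselves bridge every lex-comparable pair, and a genuinely new mechanism is required. Unlike an $e_{jk}$-move, which alters only two entries, a non-dominance lex step may redistribute across many positions, making the corresponding change in $I(T,x)$ a difference of products of many $F_m$-factors; controlling the sign of this difference uniformly on $[\xi(T(n)),0]$---an interval whose left endpoint itself depends on the tree and is not available in closed form---seems to demand new estimates on $I(P_m,x)$ and its products beyond what the earlier sections of the paper provide. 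It is this analytic step that I expect to be the hardest, and a fully satisfactory proof of the conjecture in the stated generality may well require ideas not yet present in the paper.
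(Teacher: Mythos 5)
The statement you were asked to prove is not proved in the paper at all: it is stated as Conjecture~\ref{starlike and d_G}, and the paper only establishes the strictly weaker dominance version in Theorem~\ref{main theorem} (namely $(n_1,\ldots,n_k)\succeq_d(m_1,\ldots,m_k)$ implies $T(m_1,\ldots,m_k)\succeq T(n_1,\ldots,n_k)$, with strictness under $\succ_d$), together with the remark that this verifies the conjecture ``for some family of starlike trees.'' Your proposal is likewise not a proof, and you say so yourself; so the honest verdict is that you have a correct reduction plus an openly acknowledged gap, and that gap coincides exactly with what the paper also leaves open. The parts you do carry out are sound: the reduction of the backward implication to the forward one via totality of the lexicographic order on fixed-length, fixed-sum decreasing sequences and antisymmetry of $\succeq$ modulo ${\cal I}$-equivalence (Theorem~\ref{partial order}) is correct; the vertex decomposition at the central vertex, $I(T(n_1,\ldots,n_k),x)=\prod_i F_{n_i-1}(x)+x\prod_i F_{n_i-2}(x)$ with $F_m=I(P_m,x)$, is the right consequence of Lemma~\ref{vertex-edge}(1); and your example $(7,3,3,2,2,2)$ versus $(6,5,2,2,2,2)$ correctly shows that lex-comparable pairs need not be dominance-comparable, so the conjecture genuinely exceeds Theorem~\ref{main theorem} and cannot be reached by chaining the paper's $e_{jk}$-moves alone.

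The gap, concretely, is the forward implication for lex-comparable but dominance-incomparable pairs: you propose to interpolate by ``lex-only'' steps and control the sign of a difference of products of path polynomials on $[\xi(T(n_1,\ldots,n_k)),0]$, but you give no mechanism for doing so, and the identity $F_{m-1}^2-F_mF_{m-2}=(-x)^m$ by itself does not localize the difference to a tractable two-arm exchange the way the paper's edge-deletion argument (Theorem~\ref{two part strict} applied after an $e_{jk}$-move) does. So nothing in your proposal goes beyond what Theorem~\ref{main theorem} already yields, and the statement remains, both for you and for the paper, a conjecture rather than a theorem.
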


{\rm In the following theorem we show that Conjecture~\ref{starlike and d_G} is valid for some family of starlike trees.}

\begin{thm}\label{main theorem} Let $n_1\geq\cdots\geq n_k\geq 2$ and $m_1\geq\cdots\geq m_k\geq 2$. Let $T_1$ and $T_2$ be two trees of order $n$ and maximum degree $k$ such that $T_1=T(n_1,\ldots,n_k)$ and $T_2=T(m_1,\ldots,m_k)$. If $(n_1,\ldots,n_k)\succeq_d(m_1,\ldots,m_k)$, then
 $T(m_1,\ldots,m_k)\succeq T(n_1,\ldots,n_k)$. Moreover, if $(n_1,\ldots,n_k)\succ_d(m_1,\ldots,m_k)$, then $T(m_1,\ldots,m_k)\succ T(n_1,\ldots,n_k)$.
\end{thm}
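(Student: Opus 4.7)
The plan is to combine Theorem~\ref{convert} with the vertex-deletion criterion of Theorem~\ref{two part}, reducing the task to a single ``elementary'' change of the parameter vector. Since $(n_1,\ldots,n_k)\succeq_d(m_1,\ldots,m_k)$ with both vectors in $\mathcal{B}_k$ and every entry $\geq 2$, Theorem~\ref{convert} supplies a conversion $(m_1,\ldots,m_k)=Y_0,Y_1,\ldots,Y_t=(n_1,\ldots,n_k)$ in which each step $Y_i=Y_{i-1}-e_{rs}$ (with $r<s$) increases one coordinate by $1$ and decreases another by $1$. Choosing the conversion greedily---at each stage, the smallest index $r$ where $Y_{i-1,r}$ is still below the target $n_r$, paired with an index $s>r$ where $Y_{i-1,s}$ is strictly above $n_s\geq 2$---ensures that every intermediate $Y_i$ retains all entries $\geq 2$. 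By transitivity of $\succeq$ it then suffices to prove the one-step claim: if $T_1=T(a_1,\ldots,a_k)$ with $a_1\geq\cdots\geq a_k\geq 2$ and $T_2$ is the starlike tree whose parameter multiset is obtained by adding $1$ to a coordinate $a_r$ and subtracting $1$ from a coordinate $a_s$ with $a_r\geq a_s$, under the assumption that every entry of $T_2$'s parameter is $\geq 2$ (which forces $a_s\geq 3$ and hence $a_r\geq 3$), then $T_1\succeq T_2$, with strict $\succ$ whenever $T_1\ne T_2$.

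I would prove this one-step claim by strong induction on the order $n=a_1+\cdots+a_k-k+1$, invoking the full theorem as inductive hypothesis on trees of smaller order. Apply Theorem~\ref{two part}(1) with $u$ the endpoint leaf of the $s$-th branch $P_{a_s-1}$ of $T_1$ and $v$ the endpoint leaf of the $r$-th branch $P_{a_r}$ of $T_2$. Because $a_s\geq 3$ and $a_r\geq 3$, the unique neighbor of each of $u,v$ is an internal vertex of its branch, not the central vertex. A direct inspection gives $T_1\setminus u=T_2\setminus v=T(a_1,\ldots,a_{r-1},a_r,a_{r+1},\ldots,a_{s-1},a_s-1,a_{s+1},\ldots,a_k)$, so the first hypothesis of Theorem~\ref{two part}(1) holds with equality. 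For the closed neighborhoods, $T_1\setminus[u]=T(a_1,\ldots,a_s-2,\ldots)$ and $T_2\setminus[v]=T(a_1,\ldots,a_r-1,\ldots,a_s-1,\ldots)$, with the convention that an entry equal to $1$ drops the corresponding branch. As multisets, the parameters differ precisely in the pair $\{a_r,a_s-2\}$ versus $\{a_r-1,a_s-1\}$; the assumption $a_r\geq a_s$ yields $a_r>a_s-1$, so these pairs are distinct, and $\{a_r,a_s-2\}\succ_d\{a_r-1,a_s-1\}$. Combined with the common part via Theorem~\ref{adding}, the parameter multiset of $T_1\setminus[u]$ strictly dominates that of $T_2\setminus[v]$ in $\succeq_d$. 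Both trees have order $n-2<n$, so the inductive hypothesis delivers $T_2\setminus[v]\succ T_1\setminus[u]$. In the boundary case $a_s=3$, where $T_1\setminus[u]$ loses its $s$-th branch entirely, the tree $T_2\setminus[v]$ is obtained from $T_1\setminus[u]$ by attaching a single pendant vertex at its central vertex, so the strict inequality $T_2\setminus[v]\succ T_1\setminus[u]$ follows from Theorem~\ref{subgraph} without using induction. Either way, Theorem~\ref{two part strict}(1) upgrades these two ingredients to $T_1\succ T_2$.

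Concatenating the strict elementary steps then yields $T(m_1,\ldots,m_k)=T(Y_0)\succ\cdots\succ T(Y_t)=T(n_1,\ldots,n_k)$ whenever $(m_1,\ldots,m_k)\ne(n_1,\ldots,n_k)$, proving both assertions of the theorem. The main technical hurdle I anticipate is the greedy conversion step: verifying that, starting from $M$ with all entries $\geq 2$, one can always reach $N$ (also with all entries $\geq 2$) along a path that never dips below $2$; this needs a short argument checking that at each stage a legitimate choice of $(r,s)$ exists. The base of the induction is routine, since the smallest order at which two distinct starlike trees with the same $k$ and same parameter-sum exist is $n=6$, and the required comparison $T(3,3,2)\succ T(4,2,2)$ is already implied by Theorem~\ref{TH}.
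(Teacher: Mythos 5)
Your overall strategy is the same as the paper's (reduce via Theorem~\ref{convert} to a single elementary move of the parameter vector, then induct on the order and apply Theorem~\ref{two part strict}); the only real variation is that you use vertex deletions where the paper's main case uses edge deletions. But your treatment of the boundary case $a_s=3$ contains a genuine error. There $T_1\setminus[u]$ is the starlike tree with branch parameters $\{a_i:i\neq s\}$ and $T_2\setminus[v]$ is the one with parameters $\{a_i:i\neq r,s\}\cup\{a_r-1,2\}$; both have order $n-2$, so $T_2\setminus[v]$ is \emph{not} obtained from $T_1\setminus[u]$ by attaching a pendant vertex (that would raise the order to $n-1$, and moreover the $r$-th parameter is $a_r-1$, not $a_r$). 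Hence neither graph is a proper subgraph of the other and Theorem~\ref{subgraph} does not apply; nor does your inductive hypothesis, since the two trees have different maximum degrees ($k$ versus $k-1$). The needed inequality $T_2\setminus[v]\succ T_1\setminus[u]$ is true, but it requires a further argument — for instance one more application of Theorem~\ref{two part strict}(2), deleting the last edge of the $a_r$-branch of $T_1\setminus[u]$ and the pendant edge of the new $2$-branch of $T_2\setminus[v]$: the edge-deleted graphs coincide, and deleting the closed neighborhood of the latter edge removes the center, leaving a disjoint union of paths that is a proper subgraph of the corresponding starlike tree on the other side. This is exactly how the paper disposes of its analogous case $m_{j+2}=3$ (and $m_{j+2}=4$), so the gap is repairable, but as written your case analysis fails there.

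A second, smaller issue is the conversion itself: you assert, but do not prove, that the chain from $(m_1,\ldots,m_k)$ to $(n_1,\ldots,n_k)$ can be chosen so that every intermediate vector keeps all entries $\geq 2$ and so that each step increments an entry at least as large as the one it decrements. This does hold — e.g.\ increment the first coordinate where the current vector is below the target and decrement the \emph{first} coordinate where it exceeds the target; a short prefix-sum check shows $\succeq_d$-dominance by the target is preserved and the decremented entry is $\geq 3$ — but since your whole induction chains through these intermediate starlike trees, this verification (which you yourself flag as a hurdle) must actually be supplied; the paper sidesteps it by always decrementing the coordinate immediately following the first point of difference and rechecking $m_{j+2}\geq 3$ at each stage.
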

\begin{proof}{
If $k\in\{1,2\}$, then $T_1=T_2=P_n$. So we are done. Suppose that $k\geq 3$. We proceed by induction on $n$. Since  the order of $T(n_1,\ldots,n_k)$ is $\sum_{i=1}^kn_i-k+1$ and $n_1\geq\cdots\geq n_k\geq 2$, then $n\geq k+1$. If $n=k+1$, then $T_1=T_2=T(\underbrace{2,\ldots,2}_{k})$. If $n=k+2$, then $T_1=T_2=T(3,\underbrace{2,\ldots,2}_{k-1})$. So there is nothing to prove. If $n=k+3$, then $T_1,T_2\in\{T(4,\underbrace{2,\ldots,2}_{k-1}),T(3,3,\underbrace{2,\ldots,2}_{k-2})\}$. Thus by Theorem~\ref{TH}, we are done. Now, suppose that $n\geq k+4$.

Let $(n_1,\ldots,n_k)\succeq_d(m_1,\ldots,m_k)$. By Theorem~\ref{dominate}, $(n_1,\ldots,n_k)\succeq(m_1,\ldots,m_k)$. If $m_i=n_i$, for $i=1,\ldots,k$, there is noting to prove. Suppose that $m_1=n_1,\ldots,m_j=n_j$ and $n_{j+1}>m_{j+1}$. By Theorem~\ref{convert}, $(m_1,\ldots,m_k)$ is convertible to $(n_1,\ldots,n_k)$.
Thus to complete the proof it suffices to prove the inequality for $M=(m_1,\ldots,m_k)$ and $M'=(m_1,\ldots,m_k)-e_{(j+1)(j+2)}$. More precisely, let $$T_1=T(m_1,\ldots,m_{j+1},m_{j+2},\ldots,m_k)$$ and $$T_2=T(m_1,\ldots,m_j,m_{j+1}+1,m_{j+2}-1,m_{j+3},\ldots,m_k).$$ We show that $T_1\succ T_2$. Let $u$ and $v$ be the vertices of $T_1$ and $T_2$ with degree $k$, respectively. We consider the following cases (Note that if $m_{j+2}=2$, then $m_{j+2}=\cdots=m_{k}=2$. On the other hand $n=\sum_{i=1}^kn_i+k-1=\sum_{i=1}^km_i+k-1$, thus $\sum_{i=j+1}^kn_i=\sum_{i=j+1}^km_i$. This  contradicts the inequality $n_{j+1}>m_{j+1}$. Thus $m_{j+2}\geq 3$):
\begin{enumerate}
\item[1)] Let $m_{j+2}\geq 5$. We have
$T_1\setminus u=P_{m_1-1}+\cdots+P_{m_k-1}$. Let $V(P_{m_{j+2}-1})=\{u_2,\ldots,u_{m_{j+2}}\}$ and
$E(P_{m_{j+2}-1})=\{u_2u_3,u_3u_4,\ldots,u_{m_{j+2}-1}u_{m_{j+2}}\}$ such that $u_2$ is adjacent to $u$. On the other hand one of the components of $T_2\setminus v$ is the path $P_{m_{j+1}}$. Let $V(P_{m_{j+1}})=\{v_2,\ldots,v_{m_{j+1}+1}\}$ and
$E(P_{m_{j+1}})=\{v_2v_3,v_3v_4,\ldots,v_{m_{j+1}}v_{m_{j+1}+1}\}$ such that $v_2$ is adjacent to $v$. Consider the edges $e=u_{m_{j+2}-1}u_{m_{j+2}}$ and $e'=v_{m_{j+1}}v_{m_{j+1}+1}$. Clearly $T_1\setminus e=T_2\setminus e'$. On the other hand
$$T_1\setminus[e]=T(m_1,\ldots,m_{j+1},m_{j+2}-3,m_{j+3},\ldots,m_k),$$ and $$T_2\setminus[e']=T(m_1,\ldots,m_j,m_{j+1}-2,m_{j+2}-1,m_{j+3},\ldots,m_k).$$
Since $\{m_{j+1},m_{j+1}-3\}\succeq_d\{m_{j+1}-2,m_{j+2}-1\}$, by Theorem~\ref{adding} we conclude that $$\{m_1,\ldots,m_{j+1},m_{j+2}-3,m_{j+3},\ldots,m_k\}\succeq_d\{m_1,\ldots,m_j,m_{j+1}-2,m_{j+2}-1,m_{j+3},\ldots,m_k\}.$$
Thus by the induction hypothesis, $T_2\setminus [e']\succ T_1\setminus [e]$. Using the second part of Theorem~\ref{two part strict}, we obtain that $T_1\succ T_2$.
\item[2)] Let $m_{j+2}=3$. Thus $T_1\setminus u$ has the path $P_2$ as a component. Let $u_2,u_3$ be the vertices of $P_2$, such that $u_2$ is adjacent to $u$. On the other hand one of the components of $T_2\setminus v$ is the path $P_{m_{j+1}}$. Let $V(P_{m_{j+1}})=\{v_2,\ldots,v_{m_{j+1}+1}\}$ and
$E(P_{m_{j+1}})=\{v_2v_3,v_3v_4,\ldots,v_{m_{j+1}}v_{m_{j+1}+1}\}$ such that $v_2$ is adjacent to $v$. Consider the vertices $u_3$ and $v_{m_{j+1}+1}$. Obviously $T_1\setminus u_3=T_2\setminus v_{m_{j+1}+1}$ and we have $T_1\setminus[u_3]=T(m_1,\ldots,m_{j+1},m_{j+3},\ldots,m_{k})$ and $T_2\setminus[v_{m_{j+1}+1}]=T(m_1,\ldots,m_{j},m_{j+1}-1,2,m_{j+3},\ldots,m_k)$. Similar to the first case by choosing suitable edges in the trees $T_1\setminus[u_3]$ and $T_2\setminus[v_{m_{j+1}+1}]$ and applying Theorem~\ref{subgraph}, one can see that $T_2\setminus[v_{m_{j+1}+1}]\succ T_1\setminus[u_3]$. Now by the first part of Theorem~\ref{two part strict}, we conclude that $T_1\succ T_2$.
\item[3)] Let $m_{j+2}=4$. Similar to the previous cases one can obtain $T_1\succ T_2$.
\end{enumerate}
}
\end{proof}

{\rm By Remark~\ref{root greater} we have the following result.}

\begin{cor}\label{cor main theorem} Let $n_1\geq\cdots\geq n_k\geq 2$ and $m_1\geq\cdots\geq m_k\geq 2$. Let $T_1$ and $T_2$ be two trees of order $n$ and maximum degree $k$ such that $T_1=T(n_1,\ldots,n_k)$ and $T_2=T(m_1,\ldots,m_k)$. If $(n_1,\ldots,n_k)\succeq_d(m_1,\ldots,m_k)$, then
 $\xi(T_2)\geq\xi(T_1)$.
\end{cor}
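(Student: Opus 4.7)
The plan is to derive this as an immediate corollary of Theorem~\ref{main theorem} combined with Remark~\ref{root greater}. First I would verify that the hypotheses of Theorem~\ref{main theorem} are met: the parameter tuples $(n_1,\ldots,n_k)$ and $(m_1,\ldots,m_k)$ are already ordered non-increasingly with each entry at least $2$, the trees $T_1=T(n_1,\ldots,n_k)$ and $T_2=T(m_1,\ldots,m_k)$ have the same order $n=\sum n_i - k + 1 = \sum m_i - k + 1$ and the same maximum degree $k$, and the dominance relation $(n_1,\ldots,n_k)\succeq_d (m_1,\ldots,m_k)$ is assumed. Hence Theorem~\ref{main theorem} applies and yields $T_2 \succeq T_1$ in the poset $(\mathcal{A},\succeq)$.

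Next I would invoke Remark~\ref{root greater}, which asserts that whenever $G \succeq H$, the polynomial $I(H,x)$ is strictly positive on $(\xi(G),0]$ and therefore $\xi(G)\geq \xi(H)$. Applying this with $G = T_2$ and $H = T_1$ gives $\xi(T_2)\geq \xi(T_1)$, which is exactly the required conclusion.

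There is no real obstacle here; the only point that merits attention is the reversal of direction between the parameter order $\succeq_d$ and the tree order $\succeq$ (dominance on the parts corresponds to the \emph{reverse} order on the associated starlike trees), but this reversal is already built into the statement of Theorem~\ref{main theorem}, so the corollary follows in one line.
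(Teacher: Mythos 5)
Your proposal is correct and matches the paper's own (one-line) derivation: the paper likewise obtains the corollary by combining Theorem~\ref{main theorem}, which gives $T_2\succeq T_1$ under the dominance hypothesis, with Remark~\ref{root greater}, which converts $T_2\succeq T_1$ into $\xi(T_2)\geq\xi(T_1)$. Your attention to the direction reversal between $\succeq_d$ on tuples and $\succeq$ on trees is exactly the right point to check, and it is handled correctly.
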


{\rm The following result shows that among a special family of trees, starlike trees are uniquely determined by their independence polynomials.}

\begin{cor}\label{uniqely determined} Let $n_1\geq\cdots\geq n_k\geq 2$ and $m_1\geq\cdots\geq m_k\geq 2$. Let $T_1$ and $T_2$ be two trees of order $n$ and maximum degree $k$ such that $T_1=T(n_1,\ldots,n_k)$ and $T_2=T(m_1,\ldots,m_k)$. If $(n_1,\ldots,n_k)\succ_d(m_1,\ldots,m_k)$, then
 $I(T_1,x)\neq I(T_2,x)$.
\end{cor}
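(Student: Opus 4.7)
The plan is to deduce this corollary directly from the main Theorem~\ref{main theorem} together with the definition of the strict ordering $\succ$ on graphs. Recall that by definition, for two graphs $G$ and $H$, we have $G \succ H$ precisely when $G \succeq H$ and $I(G,x) \neq I(H,x)$. So once we have a strict comparison in the graph poset $(\mathcal{A},\succeq)$, the inequality of independence polynomials is immediate.

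Concretely, I would argue as follows. Assume $(n_1,\ldots,n_k)\succ_d(m_1,\ldots,m_k)$. By the strict version asserted in the second (``Moreover'') part of Theorem~\ref{main theorem}, we conclude
\[
T(m_1,\ldots,m_k)\succ T(n_1,\ldots,n_k),
\]
that is, $T_2 \succ T_1$. By the very definition of the relation $\succ$ on $\mathcal{A}$, this entails $I(T_1,x)\neq I(T_2,x)$, which is the desired conclusion.

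There is essentially no obstacle here: the corollary is a one-line consequence of Theorem~\ref{main theorem}, and all the real work — the inductive construction via convertibility, the careful case analysis on the value of $m_{j+2}\in\{3,4,\geq 5\}$, and the use of Theorems~\ref{subgraph} and \ref{two part strict} — has already been carried out in the proof of that theorem. The only thing the corollary does is unpack what ``$\succ$'' means for the independence polynomials, so the proof is just a citation of Theorem~\ref{main theorem} plus the definition of $\succ$.
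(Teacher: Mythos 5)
Your proposal is correct and matches the paper's intent: the corollary is stated as an immediate consequence of the ``Moreover'' part of Theorem~\ref{main theorem}, since $T(m_1,\ldots,m_k)\succ T(n_1,\ldots,n_k)$ by that theorem and, by the very definition of $\succ$, this includes $I(T_1,x)\neq I(T_2,x)$. The paper offers no further argument, so your one-line citation-plus-definition derivation is exactly the intended proof.
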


{\rm We finish this paper by some questions as follows. We think that there is a relation between the ordering $\succeq$ and the degree sequence of vertices of graphs. More precisely, {\rm let $G$ be a graph of order $n$. Let $d_1\geq d_2\geq\cdots\geq d_n$ be the degree sequence of $G$. By $D_G$ we mean $(d_1,d_2,\ldots,d_n)$. Let $T\neq T_{n,k}, H_{n,k}$ be a tree of order $n$ and maximum degree $k$. It is not hard to see that $D_{H_{n,k}}\succ D_T\succ D_{T_{n,k}}$. As we proved in Theorem~\ref{TH},
$H_{n,k}\succ T\succ T_{n,k}$. It motivates us to pose the following questions:}

\begin{question}\label{d_G} Let
$T_1$ and $T_2$ be two trees of order $n$. Let $D_{T_1}\succ D_{T_2}$. Is it true that $T_1\succ T_2$?
\end{question}

\begin{question}\label{d and I} Let
$T_1$ and $T_2$ be two trees of order $n$. Suppose that $I(T_1,x)=I(T_2,x)$. Is it true that $D_{T_1}=D_{T_2}$?
\end{question}

\noindent{\bf Acknowledgements.} This research
was in part supported by a grant (No. 91050013) from School of
Mathematics, Institute for Research in Fundamental Sciences (IPM). The research of the author is partially supported by the Center of Excellence for Mathematics, University of Isfahan.

\end{document}